\newcommand{\ceil}[1]{\lceil #1 \rceil}
\newtheorem{obs}{Remark}[section]
\newtheorem{theorem}{Theorem}[section]
\newtheorem{corollary}{Corollary}[section]
\newtheorem{lemma}{Lemma}[section]
\def\bbE{{\mathbb E}}
\title{Scale-free behavior of networks with the copresence of preferential and uniform attachment rules.}
\author{Angelica Pachon, Laura Sacerdote \& Shuyi Yang\\
    \footnotesize{Mathematics Department ``G.~Peano'', University of Torino, Italy}}
\begin{document}
\maketitle

\begin{abstract}
Complex networks in different areas exhibit degree distributions with heavy upper tail. A preferential attachment mechanism in a growth process produces a graph with this feature. We herein investigate a variant of the simple preferential attachment model, whose modifications are interesting for two main reasons: to analyze more realistic models and to study the robustness of the scale free behavior of the degree distribution.  

We introduce and study a model which takes into account two different attachment rules: a preferential attachment mechanism (with probability $1-p$) that stresses the rich get richer system, and a uniform choice (with probability $p$) for the most recent nodes. The latter highlights a trend to select one of the last added nodes when no information is available.  The recent nodes can be either a given fixed number or a proportion ($\alpha n$) of the total number of existing nodes. In the first case, we prove that this model exhibits an asymptotically power-law degree distribution. The same result is then illustrated through simulations in the second case.  When the window of recent nodes has constant size, we herein prove that the presence of the uniform rule delays the starting time from which the asymptotic regime starts to hold.

The mean number of nodes of degree $k$ and the asymptotic degree distribution are also determined analytically. Finally, a sensitivity analysis on the parameters of the model is performed.
\end{abstract}
\medskip
		
		\noindent \textit{Keywords}: Barab\'asi--Albert random graph; Preferential and Uniform attachments; degree distribution.

		\medskip		
		
		\noindent \textit{MSC2010}: 05C80, 90B15, 60J80.
\section{Introduction}

Networks grow according to different paradigms and the preferential attachment mechanism is one of the simplest rules that explains some of the observed features exhibited by real networks. Barab\'asi-Albert proposed it to model the World-Wide Web (WWW) \cite{Barabasi1999}. However, it is also used for a variety of other phenomena such as citation networks or some genetic networks {\cite{342,343,Yule1925}. The preferential attachment model connects new and existing nodes  with probabilities proportional to the number of links already present. This rule is also known as rich get richer because it rewards with new links nodes with an higher number of incoming links. Networks growing according with the preferential attachment rule exhibit the \textit{scale free property} and their degree distribution are characterized by power law tails \cite{Barabasi1999,Bollobas2001,PhysRevLett.85.4633}.\\
Other models for different real world  networks request the use of different growth paradigms and do not present the scale free property. For example, some networks exhibit  the small-world phenomenon, in which sub-networks have connections between almost any two nodes within them. Furthermore, most pairs of nodes are connected by at least one short path.  On the other hand, one of the most studied models, the Erd\"os-R\'enyi random graph, does not exhibit either the power-law behavior for the degree distribution of its nodes, nor the small-world phenomenon \cite{BollobasRiordan12,ErdosRenyi59,ErdosRenyi60,janson1,KangKochPachon}. 

Mathematically, the growth of networks can be modeled through random graph processes, i.e. a family $(G^t)_{t\in \mathbb{N}}$ of random graphs (defined on a common probability space), where $t$ is interpreted as time. Different features of the model are then described as properties of the corresponding random graph process. In particular, the interest often focuses on the \textit{degree}, $\deg(v,t)$, of a vertex $v$ at time $t$, that is on the total number of incoming and (or) outgoing edges to and (or) from $v$, respectively. 
In this framework, new nodes of the Barab\'asi-Albert model  link with higher probabilities with nodes of higher degree. 

An important feature of preferential attachment models is an asymptotic power-law degree distribution:  the fraction $P(k)$ of vertices in the network  with degree $k$, goes  as $P(k)\sim k^{-\gamma}$, with $\gamma>0$, for large values of $k$.


Real world modeling instances motivated the proposal of generalizations of the Barabási-Albert model, see e.g. \cite{Berger05degreedistribution,collevecchio2013,CooperFrieze,Deijfen09,PhysRevLett.85.4629,LanskiPolitoSacerdote2014,LanskyPolitoSacerdote,12053015,RSA:RSA20137}. A common characteristic of many of these models is the presence of the same attachment rule for all the nodes of the network. However, this hypothesis is not always realistic. 

Consider, for example, a website where registered members can submit contents, such as text and posts.  Furthermore, registered users can vote previous posts to determine their position on the site's pages. Then, the submissions with the most positive votes appear on the front page together with a fixed number of the most recent posts (see www.reddit.com). This is a network in which we identify nodes with posts and links with votes. Let's consider the case in which a user submits a new post, but also votes on some previous submissions. Moreover only positive votes are allowed. It is reasonable to assume that the user tends to select and vote either the most recent posts or the most popular posts. Hence, the user votes the posts according to two different rules: with uniform probability if the user decides to select a post recently published, and  with probability proportional to the number of votes, otherwise.
This structure of network growth arises also when we consider some social networks. A new subject may connect choosing uniformly from people who recently joined the group (for instance new schoolmates) or preferring famous people present in the network since a long period of time (for example schoolmates belonging to a rock band).
A third instance arises in the case of citation networks. Typically, authors of a new paper cite recent work on the same subject as well as the most important papers on the considered topic.

Having this type of networks in our mind, in Section 2 we propose a generalization of the Barab\'asi-Albert model which takes into account the two different attachment rules for new nodes of the network. We called it  \textit{Uniform-Preferential-Attachment} model (UPA model) to pinpoint the double nature of the attachment rule.
In \cite{magner1}, Magner et al. introduced a model in which new vertices choose the nodes for their links within windows. Inspired by their idea of introducing  windows, we  formulate our model. 
We apply the preferential attachment rule to any node but we re-inforce this rule with a uniform choice for the most recent nodes. To define recent nodes we introduce windows either of fixed or linear in time amplitude. Our first result in Section 3 is the proof of recursive formulas for the expectation of the number of nodes with a given degree at a fixed time, in the case of windows of fixed amplitude. Our study  takes advantage of the existing methods  used for example in \cite{PhysRevLett.85.4633,PachonPolitoSacerdote}, together with the Azuma-Hoeffding Inequality (see Lemma 4.2.3 in \cite{janson1})}, but some of the relationships requested by these techniques are not trivial in our case. Since the mixing of uniform and preferential attachment rules suggests the possible disappearance of the scale free property we investigated this possibility. In Section 3, we also determine the degree distribution by employing a a rigorous mathematical approach in the case of fixed size windows. The use of this distribution allowed us to prove the asymptotic scale free property. These results are illustrated in Section 4 through a sensitivity analysis for the different parameters. Furthermore, in Section 4 we use simulations to study the case in which the size of the windows grows linearly in time. We show that an asymptotically  power-law degree distribution is preserved. 

Finally, Section 5 contains some concluding remarks, while the Appendix reports some auxiliary lemmas and their proofs.

\section{Uniform-Preferential-Attachment model (UPA model)}\label{UPAmodel}

In order to model instances in which recent nodes play also an important role, we propose that every new node $v_{t+1}$ selects its neighbour within a limited window $\left\{ v_{t-w+1}, ... v_{t} \right\}$ of  nodes of size $w\in\mathbb{N}$   (i.e. the $w$ youngest nodes of the network) or among all nodes $\left\{ v_{0}, ... v_{t} \right\}$. The former happens with probability $p \in \left[ 0, 1 \right]$ and the latter with probability $(1-p)$. Furthermore, the attachment rules are different in these two cases. When there is the window, the neighbour is chosen with a uniform distribution (that is, every node within the window has probability $\frac{1}{w}$ to be selected), but in absence of window, the neighbour is chosen according to a preferential attachment mechanism (that is $v_j$, $j=0,\dots,t$ has a probability proportional to $\deg(v_j,t)$ to be chosen). 

We first fix a probability $0 \le p \le 1$ and a natural number $l\ge 1$. The process starts at time $t=l$, with $l+1$  adjacent vertices,
growing monotonically by adding at each discrete time step a new vertex $v_{t+1}$ together with a directed edge connecting this with some of the vertices already present. 
As far as the window size is concerned we consider two cases:
	\begin{enumerate}
		\item For all $t\geq l$, $w:=w(t)=l$, $l\in\mathbb{N}$, that is the window size is fixed; and
		\item for each $t\geq l$, $w:=w(t)=\ceil{\alpha t}$, $0<\alpha<1$, that is the size of the window is a linear function of the size of the network.
		\end{enumerate}
	We study the first case analytically. Instead, for the second case we limit ourselves to numerical results.
The algorithmic description of the UPA model is: 

\begin{itemize}[leftmargin=20.0mm]
	\item[(a)] At the starting period $t = l$, $l\in\mathbb{N}$, the initial graph $G^l$ has $l+1$ nodes ($v_0$, $v_1$, ... $v_l$), where every node $v_j$, $1 \le j \le l$, is connected to $v_0$. 
	\item[(b)] Given $G^t$, at time $t+1$ add a new node $v_{t+1}$ together with an outgoing edge. Such edge links $v_{t+1}$ with an existing node chosen either within a window, or among all nodes present in the network at time $t$, as follows:
	\begin{itemize}
		\item with probability $p$, $v_{t+1}$  chooses its neighbour in the set $\left\{ v_{t-w+1}, ...,  v_{t} \right\}$, and each node within this window has probability  $\frac{1}{w}$ of being chosen.
		\item with probability $1-p$, the neighbour of $v_{t+1}$ is chosen from the set $\left\{ v_{0}, ..., v_{t} \right\}$, and each node $v_j$, $j=0,\dots,t$, has probability $\frac{\deg(v_j)}{2t}$ of being chosen.
	\end{itemize}
	Here, $\deg(v_j)$ indicates the total number of incoming links to $v_j$.
\end{itemize}

\begin{obs} Note that:

\begin{itemize}
	\item When $p = 0$ the UPA model reduces to the usual preferential attachment model of Barab\'asi-Albert \cite{Barabasi1999}.
	\item The initial degrees of the nodes of the UPA model,   are $\deg(v_j) = 1$ for $1 \le j \le l$ and $\deg(v_0) = l$.
\end{itemize}
\end{obs}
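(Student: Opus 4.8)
The statement is a direct consequence of the model definition given in items (a) and (b), so the ``proof'' is just a matter of unwinding the definitions; I would present it in two short pieces.

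For the first item, I would substitute $p=0$ into step (b). With $p=0$ the window branch is selected with probability $0$ and hence never occurs, so at every step $t+1$ the new vertex $v_{t+1}$ attaches to an existing vertex $v_j$, $0\le j\le t$, with probability $\deg(v_j)/2t$; this is exactly the attachment rule of the Barab\'asi--Albert model in which each new vertex brings a single outgoing edge. Together with the seed $G^l$ of item (a), this reproduces the Barab\'asi--Albert random graph process, so the only point worth spelling out is that the two processes have the same transition kernel for all $t\ge l$ (and, if one wants literal identity of laws, that the seed $G^l$ is taken as the initial configuration of the Barab\'asi--Albert process as well); since the seed is finite, its particular choice is immaterial for the asymptotic degree distribution studied later.

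For the second item, I would simply read the degrees off $G^l$. By item (a) the graph $G^l$ is the star with centre $v_0$ and leaves $v_1,\dots,v_l$: each leaf $v_j$, $1\le j\le l$, is incident to the single edge $v_j v_0$ and hence has degree $1$, while $v_0$ is incident to all $l$ of these edges and hence has degree $l$. As a consistency check, $\sum_{j=0}^{l}\deg(v_j)=l+l\cdot 1=2l$, which is twice the number of edges of $G^l$, and this matches the normalisation $\sum_j \deg(v_j)=2t$ appearing in the preferential probability $\deg(v_j)/2t$ at time $t=l$.

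There is essentially no obstacle here. The only care needed is notational: one should fix once and for all that $\deg(v_j)$ counts the total number of edges incident to $v_j$ (so that the count $\deg(v_0)=l$ and the normalisation $2t$ are unambiguous), and be explicit that ``reduces to'' means equality of the one-step dynamics, equivalently equality in law of the whole process once the seed is matched.
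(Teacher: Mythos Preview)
Your proposal is correct and is exactly what the paper intends: this is a \emph{Remark}, not a proved statement, and the paper offers no argument beyond the definitions in items~(a) and~(b), which you have unwound properly. Your care about the meaning of $\deg(v_j)$ is well placed---the paper's line ``$\deg(v_j)$ indicates the total number of incoming links'' is slightly loose, since the initial $\deg(v_j)=1$ for $1\le j\le l$ and the normalisation $\sum_j\deg(v_j)=2t$ only make sense if $\deg$ counts all incident edges, as you say.
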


\section{Main results}\label{MainResults}
In this section we analyze the empirical degree distribution of a vertex in the UPA model with fixed window size, that is, for all $t\geq l$, $w(t)=l$, $l\in\mathbb{N}$.  Let us denote with $N(k,t)$, the number of vertices with degree $k$ at time $t$ in $G^t$. We prove recurrence equations for the expected degree of each node at time $t$ in Sub-Section \ref{recursiveENt} and we determine the asymptotic degree distribution in Sub-Section \ref{Adegreedistribution}. For simplicity  we will write $l$ to refer to the window size.

\subsection{Recursive formulae for $\bbE[N(k,t)]$}\label{recursiveENt}

Herein, we distinguish the case of windows of size $l=1$ from that with $l>1$. The first case is considered in Lemma \ref{propA1} while the second is the subject of Lemma \ref{Ewindow}. 

\begin{lemma}\label{propA1}
	In the UPA model with fixed windows size $l=1$, it holds:
\begin{equation}
	\label{l1theequations}
	\bbE{N}(k,t+1)  =
	\begin{cases}
	 \left( 1-\frac{1-p}{2t} \right)\bbE[{N}(1,t)] + (1-p),&\text{ if } k=1 \\
	 \left( 1-\frac{1-p}{t} \right)\bbE[{N}(2,t)] + \frac{1-p}{2t}\bbE[{N}(1,t)] + p, &\text{ if } k=2\\
	 \left( 1-\frac{(1-p)k}{2t} \right)\bbE[{N}(k,t)] + \frac{(1-p)(k-1)}{2t}\bbE[{N}(k-1,t)], &\text{ if } k>2, \\
		\end{cases}
	\end{equation}
	with initial conditions $\bbE{N}(k,1)=2$ for $k=1$ and $\bbE{N}(k,1)=0$ otherwise. 
\end{lemma}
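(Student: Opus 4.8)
The plan is to work with the natural filtration $\mathcal{F}_t=\sigma(G^1,\dots,G^t)$, derive the one-step conditional expectation $\bbE[N(k,t+1)\mid\mathcal{F}_t]$, and then take total expectations. The crucial structural observation — really the only point requiring care — is that when $l=1$ the window at time $t$ consists of the single node $v_t$, which has just been inserted and has received no incoming edge, so $\deg(v_t,t)=1$ deterministically. Hence the uniform rule always promotes a degree-$1$ node to degree $2$; this is exactly what produces the inhomogeneous terms $(1-p)$ and $p$ in the equations for $k=1$ and $k=2$, and explains why no analogous term appears for $k>2$.

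Next, at step $t+1$ exactly one edge is added, so $G^{t+1}$ differs from $G^t$ only by (i) the new node $v_{t+1}$, which has degree $1$, and (ii) the increment by one of the degree of the unique node $u$ to which $v_{t+1}$ attaches. Writing $D$ for the degree of $u$ at time $t$, we get the identity $N(k,t+1)=N(k,t)+\mathbbm{1}[k=1]-\mathbbm{1}[D=k]+\mathbbm{1}[D=k-1]$. It remains to compute $\mathbb{P}(D=j\mid\mathcal{F}_t)$. Since $G^t$ has $t$ edges, the degrees at time $t$ sum to $2t$, and by the definition of the model the two attachment rules are used on mutually exclusive events, so $\mathbb{P}(D=j\mid\mathcal{F}_t)=p\,\mathbbm{1}[j=\deg(v_t,t)]+(1-p)\frac{jN(j,t)}{2t}=p\,\mathbbm{1}[j=1]+(1-p)\frac{jN(j,t)}{2t}$, using the structural observation above. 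The mutual exclusivity is precisely what prevents double-counting the window node when it is also eligible under the preferential rule.

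Substituting this into the identity for $N(k,t+1)$, taking conditional and then total expectations, and splitting into the cases $k=1$, $k=2$, $k>2$ — a split forced exactly by the atom of $D$ at $j=1$ coming from the uniform rule — yields the three displayed recursions; e.g. for $k>2$ both indicator corrections come solely from the preferential part, giving the coefficient $1-\frac{(1-p)k}{2t}$ on $\bbE[N(k,t)]$ and $\frac{(1-p)(k-1)}{2t}$ on $\bbE[N(k-1,t)]$. The initial condition is read directly off $G^1$: it consists of $v_0$ and $v_1$ joined by a single edge, so both have degree $1$, whence $N(1,1)=2$ and $N(k,1)=0$ otherwise. The main (and only) obstacle is the bookkeeping around the window node $v_t$ — knowing its degree is deterministically $1$ and ensuring it is counted once — and this is handled by the observation and the disjointness of the two rules described above.
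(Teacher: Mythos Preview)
Your proof is correct and follows essentially the same approach as the paper: both identify that the single window node $v_t$ has degree $1$ deterministically, compute the one-step conditional expectation $\bbE[N(k,t+1)\mid\mathcal{F}_t]$ from the attachment probabilities, and then take total expectations. The only difference is presentational---you package the one-step change via the indicator identity $N(k,t+1)=N(k,t)+\mathbbm{1}[k=1]-\mathbbm{1}[D=k]+\mathbbm{1}[D=k-1]$, whereas the paper lists the full conditional distribution of $N(k,t+1)$ case by case before averaging; the computations are the same.
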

\begin{proof}[Proof of Lemma \ref{propA1}]
We start calculating the probability that a node with degree $k$ at time $t$ receives a new link from $v_{t+1}$. To achieve this we distinguish  two cases, $k > 1$ and $k = 1$. Let us consider a node with degre $k>1$. By construction its degree can increase only when it is  selected through a preferential attachment mechanism (which happens with probability $1-p$). On the contrary when $k=1$  we should also consider the effect of the window (which happens with probability $p$).
Thus, the probability that the new link exiting from $v_{t+1}$ attains a node with degree $k>1$ at time $t$ is given by
\begin{equation}\label{probkreceivelink}
(1-p) \frac{N(k,t)k}{\sum_{k'=1}^{+\infty}N(k',t)k'}=(1-p) \frac{N(k,t)k}{2t},
\end{equation}
while  the probability that the new link attains a node with degree $k=1$ at time $t$ is
\begin{equation}
(1-p) \frac{N(1,t)}{2t} + p .
\end{equation}

Let $\mathcal{G}_{s}$ be the $\sigma$-field generated by the appearance of edges up to time $s$. Observe that conditioning on $\mathcal{G}_t$, $N(k,t+1)$, $k\geq 1$, is a random variable depending on $N(k,t)$. Thus, if $k=1$ at time $t+1$ the number of nodes of degree 1, $N(1,t+1)$, remains unchanged or increases by 1. The first possibility happens either in presence of a window, or when $v_{t+1}$ is attached to an existent node of degree 1 at time $t$ in absence of window. The second possibility arises if the selected node has degree larger than 1. That is,
 
\begin{equation}\label{P1}
\begin{split}
N(1,t+1) = 
\begin{cases}
N(1,t) & \text{ ~ w.p. ~ } p+\frac{(1-p)N(1,t)}{2t}\\
N(1,t)+1 & \text{ ~ w.p. ~ } \frac{(1-p)(2t-N(1,t))}{2t}
\end{cases}
\end{split}
\end{equation}

where the abbreviation w.p. means ``with probability''. 

Similarly, conditioned on $\mathcal{G}_t$  the probability distribution of $N(k,t+1)$, $k\geq2$, is

\begin{equation}\label{P2}
\begin{split}
N(2,t+1) = 
\begin{cases}
N(2,t)+1 & \text{ ~ w.p. ~ } p+\frac{(1-p)N(1,t)}{2t}\\
N(2,t)-1 & \text{ ~ w.p. ~ } \frac{(1-p) \cdot 2N(2,t)}{2t}\\
N(2,t) & \text{ ~ w.p. ~ } \frac{(1-p)\left[ 2t-N(1,t)-2N(2,t) \right]}{2t},
\end{cases}
\end{split}
\end{equation}
and for $k > 2$
\begin{equation}\label{Pk}
\begin{split}
N(k,t+1) = 
\begin{cases}
N(k,t)+1 & \text{ ~ w.p. ~ } \frac{(1-p)N(k-1,t)(k-1)}{2t}\\
N(k,t)-1 & \text{ ~ w.p. ~ } \frac{(1-p) N(k,t) k}{2t}\\
N(k,t) & \text{ ~ w.p. ~ } p+\frac{(1-p)\left[ 2t-N(k,t)k-N(k-1,t)(k-1) \right]}{2t}.
\end{cases}
\end{split}
\end{equation}
Taking the conditional expectations given $\mathcal{G}_t$  of (\ref{P1}), (\ref{P2}) and (\ref{Pk}), respectively, we get
\begin{equation}\label{E1}
\begin{split}
\bbE\left[ N(k,t+1) \mid \mathcal{G}_t\right]=
\begin{cases}
\left( 1-\frac{1-p}{2t} \right)N(1,t) + (1-p), &\text{ if } k=1\\
\left( 1-\frac{1-p}{t} \right)N(2,t) + \frac{1-p}{2t}N(1,t) + p, &\text{ if } k=2\\
\left( 1-\frac{(1-p)k}{2t} \right)N(k,t) + \frac{(1-p)(k-1)}{2t}N(k-1,t),&\text{ if }  k>2.
\end{cases}
\end{split}
\end{equation}
Finally, taking expectation of both sides of (\ref{E1}) we obtain the desired result.
\end{proof}

To switch to the case of window size $l>1$, let us define $M_m(t)$ as the degree of the $m$-th node from the left, inside the window in $G^t$. That is, $M_1(t)=d(v_{t-l+1}),M_2(t)=d(v_{t-l+2}),\dots,M_m(t)=d(v_{t-l+m}),\dots,M_l(t)=d(v_{t})$ (see equations (\ref{PM1}) and (\ref{PMk}) for recursive formulae for $M_m(t)$).
Then, it holds:
\begin{lemma}\label{Ewindow}
In the UPA model with fixed windows size $l>1$:
	\begin{equation}\label{theequations-l-general-1}
	\bbE[N(k,t+1)] = 
	\begin{cases}
	\bbE[N(1,t)] + 1 - \frac{p\sum_{m=1}^{l} \mathbb{P}(M_m(t)=1) }{l} -(1-p)  \frac{\bbE[N(1,t)]}{2t},&\text{ if } k=1\\
	 \\
	\bbE[N(k,t)] + \frac{p}{l}\left[ \sum_{m=1}^{l} \mathbb{P}(M_m(t)=k-1) - \sum_{m=1}^{l} \mathbb{P}(M_m(t)=k) \right] \\
	+\frac{1-p}{2t} \left\{ (k-1)\bbE[N(k-1,t)] - k\bbE[N(k,t)] \right\}, &\text{ if } k>1
	\\
	\end{cases}
	\end{equation}
	with initial conditions $\bbE{N}(k,l)=l$ for $k=1$, $\bbE{N}(k,l)=1$ for $k=l$, and $\bbE{N}(k,l)=0$ otherwise.
\end{lemma}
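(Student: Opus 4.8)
The plan is to follow the proof of Lemma~\ref{propA1} closely, the single new ingredient being that for $l>1$ the window $\{v_{t-l+1},\dots,v_t\}$ carries $l$ vertices whose degrees are genuinely random, whereas for $l=1$ the lone window vertex $v_t$ always had degree $1$. I would first record two structural facts. (i) The incoming vertex $v_{t+1}$ always arrives with degree $1$ and carries exactly one edge, so that edge raises the degree of precisely one already-present ``target'' vertex by $1$ and leaves every other degree unchanged. (ii) For every $t\ge l$ one has $t-l+1\ge 1$, so the window is $\{v_{t-l+1},\dots,v_t\}$ and never contains $v_0$; hence the anomalous degree $l$ of $v_0$ enters the dynamics only through the preferential term, never the uniform one, and the window always has exactly $l$ slots.

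Next I would compute, conditionally on $\mathcal{G}_t$, the probability that the target vertex has degree $k$. Splitting on the window/no-window dichotomy and summing over the disjoint events ``the target is this particular vertex'' (a window vertex of degree $k$ is hit with probability $\frac{p}{l}+(1-p)\frac{k}{2t}$, a non-window vertex of degree $k$ with probability $(1-p)\frac{k}{2t}$), this probability equals $\frac{p}{l}\,\#\{m\in\{1,\dots,l\}:M_m(t)=k\}+(1-p)\,\frac{k\,N(k,t)}{2t}$, the first summand counting the $\#\{m:M_m(t)=k\}$ window slots of degree $k$ (each a distinct vertex, each hit with the uniform chance $\frac{1}{l}$), the second being the Barab\'asi--Albert contribution $\frac{\deg(\cdot)}{2t}$ summed over the $N(k,t)$ vertices of degree $k$. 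Both $N(k,t)$ and $\#\{m:M_m(t)=k\}$ are $\mathcal{G}_t$-measurable.

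From here the one-step conditional law of $N(k,t+1)$ is written exactly as in \eqref{P1}--\eqref{Pk}. For $k>1$ it is a $\{+1,-1,0\}$ move — $N(k,t+1)=N(k,t)+1$ if the target had degree $k-1$, $N(k,t)-1$ if it had degree $k$, and $N(k,t)$ otherwise — so inserting the target-degree probabilities above and taking $\bbE[\,\cdot\mid\mathcal{G}_t]$ as in \eqref{E1} yields the $k>1$ line. For $k=1$ one extra step of accounting is needed: $v_{t+1}$ is itself a fresh degree-$1$ vertex, so $N(1,t+1)=N(1,t)+1$ unless the target also had degree $1$, in which case the gain from $v_{t+1}$ cancels the loss of the promoted vertex and $N(1,t+1)=N(1,t)$; carrying this out and simplifying with $p+(1-p)=1$ gives $\bbE[N(1,t+1)\mid\mathcal{G}_t]=N(1,t)+1-\frac{p}{l}\#\{m:M_m(t)=1\}-(1-p)\frac{N(1,t)}{2t}$. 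Taking total expectations, using $\bbE[\#\{m:M_m(t)=k\}]=\sum_{m=1}^{l}\mathbb{P}(M_m(t)=k)$, produces exactly \eqref{theequations-l-general-1}; the initial conditions at $t=l$ are read off $G^l$, whose vertices $v_1,\dots,v_l$ have degree $1$ and whose vertex $v_0$ has degree $l$.

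All steps are routine; the only places demanding care are the $k=1$ cancellation just described and the bookkeeping distinction between the ``number of vertices of degree $k$'', $N(k,t)$, which drives the preferential term, and the ``number of window slots of degree $k$'', $\#\{m:M_m(t)=k\}$, which drives the uniform term. Conflating the two, or forgetting that $v_0$ is never in the window for $t\ge l$, would corrupt the coefficients.
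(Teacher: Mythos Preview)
Your proof is correct and follows essentially the same route as the paper: both compute the conditional law of $N(k,t+1)$ given $\mathcal{G}_t$ by splitting on the window/no-window dichotomy, write $N(k,t+1)$ as a $\{+1,-1,0\}$ (respectively $\{0,+1\}$) increment of $N(k,t)$, take conditional and then full expectations, and read off the initial conditions from $G^l$. Your explicit structural remarks (that $v_0$ never lies in the window for $t\ge l$, and the distinction between $N(k,t)$ and the window count $\#\{m:M_m(t)=k\}$) are not spelled out in the paper but are implicitly used there.
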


\begin{proof}[Proof of Lemma \ref{Ewindow}]
We proceed in the same way as in the proof of Lemma \ref{propA1}. Recall that at time $t+1$,
$v_{t+1}$ appears with degree 1, and the number of nodes with degree 1 remains unchanged if and only if the neighbour chosen by $v_{t+1}$ is a vertex with degree 1 too. This happens  with probability $ N(1,t)/2t$ if there is no window, and with probability $\sum_{m=1}^{l}\mathbbm{1}(M_m(t)=1)/l$ if the selected node lies inside the window.
Thus, the random variable $N(1,t+1)$ given $\mathcal{G}_t$ is given by
\begin{equation}\label{Nl1}
N(1,t+1) = \begin{cases}
N(1,t) & \text{ ~ w.p. ~ } p\frac{\sum_{m=1}^{l}\mathbbm{1}(M_m(t)=1)}{l} + (1-p)\frac{1 \cdot N(1,t)}{2t}\\
N(1,t)+1 & \text{ ~ w.p. ~ } 1-\left[p\frac{\sum_{m=1}^{l}\mathbbm{1}(M_m(t)=1)}{l} + (1-p)\frac{1 \cdot N(1,t)}{2t} \right] .\\
\end{cases}
\end{equation}

With a similar reasoning, for $k \ge 2$, we obtain  $N(k,t+1)$ given $\mathcal{G}_t$, 
\begin{equation}\label{Nlk}
N(k,t+1) = \begin{cases}
N(k,t)-1 & \text{ ~ w.p. ~ } p\frac{\sum_{m=1}^{l}\mathbbm{1}(M_m(t)=k)}{l}+(1-p)\frac{kN(k,t)}{2t}\\
N(k,t)+1 & \text{ ~ w.p. ~ } p\frac{\sum_{m=1}^{l}\mathbbm{1}(M_m(t)=k-1)}{l}+(1-p)\frac{(k-1)N(k-1,t)}{2t}\\
N(k,t) & \text{ ~ w.p. ~ } 1 - p\frac{\sum_{m=1}^{l}\mathbbm{1}(M_m(t)=k)}{l}-(1-p)\frac{kN(k,t)}{2t} \\
~ & ~ ~ ~ ~ ~ ~ ~ -p\frac{\sum_{m=1}^{l}\mathbbm{1}(M_m(t)=k-1)}{l}-(1-p)\frac{(k-1)N(k-1,t)}{2t}.
\end{cases}
\end{equation}

Taking the conditional expectation of (\ref{Nl1}) and (\ref{Nlk}) we get
\begin{equation}\label{13}
\bbE(N(1,t+1) \mid \mathcal{G}_t) = N(1,t) + 1 - \frac{p\sum_{m=1}^{l} \mathbbm{1}(M_m(t)=1) }{l} -(1-p)\frac{N(1,t)}{2t},
\end{equation}
and
\begin{equation}\label{14}
\begin{split}
\bbE[N(k,t+1) \mid \mathcal{G}_t] = N(k,t) + & \frac{p}{l}\left[ \sum_{m=1}^{l} \mathbbm{1}(M_m(t)=k-1) - \sum_{m=1}^{l} \mathbbm{1}(M_m(t)=k) \right] \\
+ & \frac{1-p}{2t} \left[ (k-1)N(k-1,t) - kN(k,t) \right] .
\end{split}
\end{equation}
Finally, taking the  expectation of both sides of (\ref{13}) and (\ref{14}) we obtain (\ref{theequations-l-general-1}).
\end{proof}

\subsection{Asymptotic degree distribution}\label{Adegreedistribution}
\begin{theorem}\label{main1}
	Consider the UPA model with fixed windows size $l \in \mathbb{N}$.
	Then,
	\begin{equation}
	\frac{N(k,t)}{t} \rightarrow  P(k)
	\end{equation} \\
	in probability as $t\rightarrow\infty$.
	
	Furthermore, for $l=1$ it holds:
\begin{eqnarray}\label{pkl1}
{P}(k)=\begin{cases}
\frac{2(1-p)}{3-p} & \text{ if $k=1$}\\
\frac{(1-p)^2}{(2-p)(3-p)} +\frac{p}{2-p} & \text{ if $k=2$}\\
\Big(\frac{2}{1-p}+2\Big)\Big(\frac{2}{1-p}+1\Big)B\left(k,1+\frac{2}{1-p}\right)\overline P(2) & \text{ if $k>2$},
\end{cases}
\end{eqnarray}
while for $l>1$ we have:
\begin{eqnarray}\label{pkll1}
{P}(k)=\begin{cases}
\frac{2}{(3-p)}\left(1-\frac{p}{l}\right)^l& \text{ ik $k=1$}\\ 
\frac{2}{2+k(1-p)}\left(\frac{p}{l}(H_{k-1}-H_k)+\frac{(1-p)(k-1)}{2}{P}(k-1)\right)& \text{ if $k=2,\dots,l+1$}\\
\frac{B\left(k,l+2+\frac{2}{1-p}\right)}{B\left(l+1,k+1+\frac{2}{1-p}\right)}{P}(l+1)& \text{ if $k>l+1$},
\end{cases}
\end{eqnarray}
where $B(x,y)$ is the Beta function and 
\begin{equation}\label{FHk}
H_k=\begin{cases}
	\left(\frac{p}{l}\right)^{k-1}\sum_{m=1}^{l-(k-1)}\binom{l-m}{l-m-(k-1)}\left(1-\frac{p}{l}\right)^{l-m-(k-1)} &\text{ if } k=1,\dots,l.\\
	0	&\text{ if } k>l.
	\end{cases}
	\end{equation} 
\end{theorem}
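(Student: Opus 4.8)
The plan is to prove the convergence $N(k,t)/t\to P(k)$ in two independent steps — concentration of $N(k,t)$ around its mean, and convergence of the normalised mean — and then to read off the explicit values of $P(k)$ from the recursions of Lemmas~\ref{propA1} and~\ref{Ewindow}. For the concentration, fix $k,t$ and let $Z_s:=\bbE[N(k,t)\mid\mathcal{G}_s]$, $l\le s\le t$, be the Doob martingale interpolating between $\bbE[N(k,t)]$ and $N(k,t)$. Since exposing one further edge alters the subsequent evolution only through one extra unit of degree placed at one vertex, a routine coupling of two copies of the process that agree up to time $s$ bounds the increments $|Z_s-Z_{s-1}|$ by an absolute constant, as in the standard estimates for preferential-attachment models (\cite{PhysRevLett.85.4633,PachonPolitoSacerdote}). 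The Azuma--Hoeffding inequality (Lemma 4.2.3 in \cite{janson1}) then gives $\mathbb{P}(|N(k,t)-\bbE[N(k,t)]|>\varepsilon t)\le 2e^{-c\varepsilon^2 t}$ for some $c>0$, so it remains only to show $\bbE[N(k,t)]/t\to P(k)$.

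For the normalised means I would invoke the elementary fact (recorded among the auxiliary lemmas in the Appendix) that a real sequence satisfying $a_{t+1}=(1-\tfrac{c_t}{t})a_t+d_t$ with $c_t\to c>0$ and $d_t\to d$ obeys $a_t/t\to d/(1+c)$. In the case $l=1$, apply this to Lemma~\ref{propA1}: for $k=1$ one has $c_t=\tfrac{1-p}{2}$, $d_t=1-p$, so $\bbE[N(1,t)]/t\to\tfrac{2(1-p)}{3-p}=:P(1)$; then, by induction on $k$, the driving term of the recursion for $N(k,t)$ converges (to $\tfrac{1-p}{2}P(1)+p$ when $k=2$, and to $\tfrac{(1-p)(k-1)}{2}P(k-1)$ when $k>2$) while the coefficient is $c_t=\tfrac{(1-p)k}{2}$. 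This yields $P(2)=\big(\tfrac{1-p}{2}P(1)+p\big)/(2-p)$ and the first-order recursion $P(k)=\tfrac{k-1}{\,2/(1-p)+k\,}P(k-1)$ for $k>2$; telescoping from $P(2)$ and rewriting the resulting ratio of Gamma factors as a Beta function gives exactly the stated formula for $k>2$.

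The genuinely new difficulty appears for $l>1$, where the recursion of Lemma~\ref{Ewindow} is driven by $q_k(t):=\sum_{m=1}^{l}\mathbb{P}(M_m(t)=k)$, so before applying the recursion lemma one must prove $q_k(t)\to H_k$. The idea is to analyse the window profile $(M_1(t),\dots,M_l(t))$ via its own recursions: at each step the window shifts left, $v_{t+1}$ enters at position $l$ with degree $1$, and the occupant of position $m+1$ moves to position $m$, gaining one unit of degree precisely when it is selected as the new neighbour — an event of probability $\tfrac{p}{l}+(1-p)\tfrac{M_{m+1}(t)}{2t}$. As $t\to\infty$ the preferential term is $O(1/t)$ and disappears, so a vertex that has spent $j$ steps in the window has, in the limit, degree $1+\mathrm{Bin}(j,\tfrac{p}{l})$; since the occupant of position $m$ has been in the window for $l-m$ steps, $M_m(t)$ converges in distribution to $1+\mathrm{Bin}(l-m,\tfrac{p}{l})$, and summing the binomial probabilities over $m$ gives $q_k(t)\to\sum_{m=1}^{l}\binom{l-m}{k-1}(\tfrac{p}{l})^{k-1}(1-\tfrac{p}{l})^{l-m-(k-1)}=H_k$, which vanishes for $k>l$. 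The main obstacle is to make this rigorous: one needs uniform control of the $O(1/t)$ corrections over all positions $m$ and all degrees $k$, and a justification that the successive degree increments of a tagged vertex are asymptotically independent.

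With $q_k(t)\to H_k$ in hand, applying the recursion lemma to Lemma~\ref{Ewindow} closes the argument. For $k=1$ the driving term is $1-\tfrac{p}{l}q_1(t)\to 1-\tfrac{p}{l}H_1=(1-\tfrac{p}{l})^l$ (using $\sum_{j=0}^{l-1}(1-\tfrac{p}{l})^j=\tfrac{l}{p}(1-(1-\tfrac{p}{l})^l)$) and $c_t=\tfrac{1-p}{2}$, so $P(1)=\tfrac{2}{3-p}(1-\tfrac{p}{l})^l$. For $2\le k\le l+1$ the driving term tends to $\tfrac{p}{l}(H_{k-1}-H_k)+\tfrac{(1-p)(k-1)}{2}P(k-1)$ with $c_t=\tfrac{(1-p)k}{2}$, giving the middle line of \eqref{pkll1}. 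For $k>l+1$ one has $H_{k-1}=H_k=0$, hence $P(k)=\tfrac{k-1}{\,2/(1-p)+k\,}P(k-1)$ again; telescoping this from $P(l+1)$ and expressing the product through Beta functions yields the last line, which completes the proof.
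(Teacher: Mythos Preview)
Your proposal is correct and follows the same four–step architecture as the paper: recursive equations for $\bbE[N(k,t)]$, convergence of the normalised means, explicit identification of $P(k)$, and concentration via the Doob martingale plus Azuma--Hoeffding with bounded increments. The treatment of the window variables $M_m(t)$ is also the same in spirit: the paper's Lemma~\ref{l-lemmaHk} writes down the exact recursions \eqref{PM1}--\eqref{PMk}, observes that the transition coefficients $I_k(t),L_{k-1}(t)$ converge to $1-p/l$ and $p/l$, and reads off $H_k$ --- which is precisely your ``$1+\mathrm{Bin}(l-m,p/l)$'' interpretation made rigorous.

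The one genuine methodological difference is in how the convergence $\bbE[N(k,t)]/t\to P(k)$ is established. You package this into a single elementary lemma for sequences $a_{t+1}=(1-c_t/t)a_t+d_t$ with $c_t\to c>0$, $d_t\to d$, and then iterate in $k$. The paper's Appendix does \emph{not} contain such a lemma; instead Lemmas~\ref{lemmaconvk} and~\ref{l-lemmaconv1} argue case by case, proving monotonicity for $k=1$ and a hand--made $\sup/\inf$ bisection squeeze for $k\ge2$. Your route is cleaner and shorter, and it is exactly the kind of general tool these models invite; the paper's route has the virtue of being entirely self--contained but is considerably longer. Just be aware that your parenthetical ``recorded among the auxiliary lemmas in the Appendix'' does not match the actual paper --- you would have to state and prove that recursion lemma yourself.
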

\begin{proof}
The proof includes the following steps:
\begin{enumerate}
	\item[(1)] we determine recursivelly $\bbE[N(k,t)], k=1,2,\ldots$;
	\item[(2)] we prove the existence of ${P}(k) := \lim_{t \to \infty}\bbE[N(t,k)]/t$,
	\item[(3)] we determine an explicit expression for ${P}(k)$,
	\item[(4)] we use the Azuma-Hoeffding Inequality (see Lemma 4.2.3 in \cite{janson1}) to prove convergence in probability of $N(k,t)/t$ to ${P}(k)$.  
\end{enumerate} 
The first step is solved by the results of Sub-Section 3.1. For the proof of steps (2) and (3) it is necessary to distinguish the cases of windows size $l=1$ and $l > 1$, respectively. The proof of the second and third steps request some technical results that are reported and proved in the Appendix. In particular, Lemma \ref{lemmaconvk} and Lemma \ref{l-lemmaconv1} in Appendix prove the existence of ${P}(k)$, in the cases of windows size $l=1$ and $l > 1$, respectively. Furthermore, Lemma \ref{l1asppl} and Lemma \ref{5.1} in Appendix give the explicit expression of such limits in the cases of windows size $l=1$ and $l > 1$, respectively.
It remains to prove step (4).

Define $X_s = E\left( N(k,t) \mid \mathcal{G}_s \right)$, $s \le t$, and observe that $X_s$ is a martingale. Furthermore, $\mid X_s-X_{s-1} \mid \le 2$. This happens because the degree of any vertex $v_k, k\neq i,j$ is not affected by the rising of $v_{s-1}$ and $v_s$, whether these last vertices link to $v_i$ and $v_j$, $i<s-1$ and  $j<s$, respectively. Furthermore, they do not affect the probabilities of the vertices $v_k$ that will be chosen later.  Therefore, applying the Azuma-Hoeffding inequality with
$X_t=N(k,t)$, $X_0=\bbE[N(k,t)]$ and taking $x = \sqrt{t\log(t)}$,  we obtain
\begin{equation*}
P\left(\mid N(k,t)/t - \bbE[N(k,t)]/t\mid > \sqrt{log(t)/t} \right) \le t^{-\frac{1}{8}}.
\end{equation*}
Hence, as $t\rightarrow\infty$ we have $N(k,t)/t\rightarrow {P}(k)$ in probability. 
\end{proof}
Recalling that $B(x,y)=\Gamma(x)\Gamma(y)/\Gamma(x+y)$ and using that $\Gamma(x+a)/\Gamma(x+b)\sim x^{a-b}[1+\sfrac{(a-b)(a+b-1)}{2x}]$, for $x$ large enough (see \cite{Abramowitz}),  we get the following. 
\begin{corollary}\label{remarkTeo}
As $k\rightarrow\infty$, for $l=1$ 
\begin{equation}\label{asymDegree}
\frac{N(k,t)}{t}\sim C_p \Big[k^{-\left(1+\frac{2}{1-p}\right)}-\frac{3-p}{(1-p)^2} k^{-\left(2+\frac{2}{1-p}\right)}\Big],
\end{equation}
where $C_p=\Gamma(1+\sfrac{2}{(1-p)})(\sfrac{2}{(1-p)}+2)(\sfrac{2}{(1-p)}+1) P(2)$,
and for $l>1$, 
\begin{equation}\label{asymDegree2}
\frac{N(k,t)}{t}\sim C_{p,l} \Big[k^{-\left(1+\frac{2}{1-p}\right)}-\frac{3-p}{(1-p)^2} k^{-\left(2+\frac{2}{1-p}\right)}\Big],
\end{equation}
where $C_{p,l}=\Gamma(l+2+\sfrac{2}{(1-p)})\big(\Gamma(l+1)\big)^{-1} P(l+1)$.
\end{corollary}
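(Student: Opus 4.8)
The plan is to read off the large-$k$ behaviour directly from the closed forms for $P(k)$ established in Theorem \ref{main1}, using ratios of Gamma functions. Consider first the case $l=1$. For $k>2$, Theorem \ref{main1} gives $P(k) = (\tfrac{2}{1-p}+2)(\tfrac{2}{1-p}+1)\,B(k,1+\tfrac{2}{1-p})\,P(2)$. Writing $B(k,1+\tfrac{2}{1-p}) = \Gamma(k)\Gamma(1+\tfrac{2}{1-p})/\Gamma(k+1+\tfrac{2}{1-p})$ and collecting the $k$-independent factors into $C_p=\Gamma(1+\tfrac{2}{1-p})(\tfrac{2}{1-p}+2)(\tfrac{2}{1-p}+1)P(2)$, this becomes $P(k) = C_p\,\Gamma(k)/\Gamma(k+1+\tfrac{2}{1-p})$. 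For $l>1$ and $k>l+1$ the manipulation is identical: substituting $B(x,y)=\Gamma(x)\Gamma(y)/\Gamma(x+y)$ into $P(k)=\big(B(k,l+2+\tfrac{2}{1-p})/B(l+1,k+1+\tfrac{2}{1-p})\big)\,P(l+1)$, the factor $\Gamma(k+l+2+\tfrac{2}{1-p})$ cancels between numerator and denominator and one obtains $P(k) = C_{p,l}\,\Gamma(k)/\Gamma(k+1+\tfrac{2}{1-p})$ with $C_{p,l}=\Gamma(l+2+\tfrac{2}{1-p})\,(\Gamma(l+1))^{-1}\,P(l+1)$.

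Thus in both cases the $k$-dependence is carried by the single ratio $R(k):=\Gamma(k)/\Gamma(k+1+\tfrac{2}{1-p})$, and it remains to expand $R(k)$ as $k\to\infty$. I would apply the asymptotic $\Gamma(x+a)/\Gamma(x+b)\sim x^{a-b}[1+\tfrac{(a-b)(a+b-1)}{2x}]$ with $x=k$, $a=0$ and $b=1+\tfrac{2}{1-p}$, so that $a-b=-(1+\tfrac{2}{1-p})$ and $a+b-1=\tfrac{2}{1-p}$; this gives $R(k)\sim k^{-(1+\frac{2}{1-p})}\big[1-\tfrac{1}{1-p}(1+\tfrac{2}{1-p})\tfrac1k\big]$. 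Simplifying the coefficient via $\tfrac{1}{1-p}(1+\tfrac{2}{1-p})=\tfrac{(1-p)+2}{(1-p)^2}=\tfrac{3-p}{(1-p)^2}$ yields $R(k)\sim k^{-(1+\frac{2}{1-p})}-\tfrac{3-p}{(1-p)^2}\,k^{-(2+\frac{2}{1-p})}$. Multiplying this expansion by $C_p$ (resp. $C_{p,l}$) and recalling from Theorem \ref{main1} that $N(k,t)/t\to P(k)$ in probability then gives exactly (\ref{asymDegree}) (resp. (\ref{asymDegree2})).

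The argument is essentially bookkeeping with Gamma functions, so I do not expect a genuine obstacle. The points that need care are: retaining the $O(1/x)$ correction term in the Gamma-ratio asymptotic, since the bare leading Stirling estimate $\Gamma(x+a)/\Gamma(x+b)\sim x^{a-b}$ would miss the second summand $k^{-(2+2/(1-p))}$; collapsing the rational prefactor $\tfrac{1}{1-p}(1+\tfrac{2}{1-p})$ correctly to $\tfrac{3-p}{(1-p)^2}$; and verifying that the terms discarded from the full asymptotic expansion of $\Gamma(x+a)/\Gamma(x+b)$ are indeed $o(k^{-(2+2/(1-p))})$, which is immediate because the next correction is of order $x^{a-b-2}$.
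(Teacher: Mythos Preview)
Your proposal is correct and follows exactly the route sketched in the paper: rewrite the Beta functions from Theorem~\ref{main1} as Gamma ratios, isolate the single $k$-dependent factor $\Gamma(k)/\Gamma(k+1+\tfrac{2}{1-p})$, and expand it via $\Gamma(x+a)/\Gamma(x+b)\sim x^{a-b}[1+\tfrac{(a-b)(a+b-1)}{2x}]$. The paper gives only this one-line hint before the corollary, so your version is simply a more detailed execution of the same argument; the algebraic checks (the cancellation of $\Gamma(k+l+2+\tfrac{2}{1-p})$ for $l>1$ and the simplification $\tfrac{1}{1-p}(1+\tfrac{2}{1-p})=\tfrac{3-p}{(1-p)^2}$) are all correct.
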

Note that the value of $p$ is part of the exponent of the power law in (\ref{asymDegree}) and (\ref{asymDegree2}). Moreover, when the UPA model corresponds to the Barab\'asi-Albert model, that is when $l=1$ and $p=0$, the power law exponent is equal to $-3$, as expected.


\section{Examples of the UPA model and its generalization}

In this section we show through some examples, our analytical and asymptotic results. We then study the UPA model when the windows size grows linearly in time. In this case, we use simulations to study the empirical degree distribution. We divided this section into three Sub-sections. In Sub-Section \ref{ML} we study the mean number of nodes having a certain degree in different instances, while in Sub-Section \ref{FWS} we illustrate our results on the empirical degree distribution. Finally, Sub-Section \ref{LWS} illustrates the case with time dependent windows size.

\subsection{Mean number of nodes}\label{ML}
From Corollary \ref{remarkTeo} we know that the presence of uniform attachment is not sufficient to asymptotically destroy the scale free feature. Herein we investigate the role of the presence of the windows for fixed times and using different weights for the two types of attachment rule. We use Lemmas \ref{propA1} and \ref{Ewindow} with formulas (\ref{PM1}) and (\ref{PMk}) to compute  $\bbE\left[N(k,t)\right]$ as a function of $k$ and $t$ for different values of $l$ and of $p$.  
In Fig. 1a, we first fix $t=101$ and $p=0.5$, to study the mean number of nodes having degree $k$, with $k=1,2,\cdots$, for different sizes of the window. Since the grade $k=1$ is forced by the model hypothesis ($N(1,l)=l$), the figure is of interest for $k \geq 2$. As expected, the effect of the size of the window is stronger for lower degrees, and it tends to disappear as the degree of the node increases. The same result is observed increasing the considered time (see inset of Fig. 1a). Fig. 1b illustrates the increase of $\bbE\left[N(10,t)\right]$ as $t$ increases, for different sizes of the window. 
First of all, we can notice that the size of the window penalizes the growth of $\bbE\left[N(10,t)\right]$. Each of the curves of $\bbE\left[N(10,t)\right]$, corresponding to $l=10$ and $l=100$, crosses the analogous curve for $l=1$ after a certain time. After such times, the curves corresponding to $l=10$ and $l=100$ grow faster than in the case of $l=1$. This fact can be explained by noting that for $l=1$, the uniform rule does not privilege the choice of a vertex with degree 10. In this case, the uniform rule determines the choice of the last added vertex, which has degree 1. This also determines the faster increase of $\bbE\left[N(10,t)\right]$ for $l=10$ than for $l=100$, as $t$ increases enough. This phenomenon does not disappear if we increase the value of $k$ (see inset of Fig. 1b). The shapes of the curves do not change but there is a remarkable change of scale.
\begin{figure}[H]
	\centering
	\begin{subfigure}{.48\textwidth}
		\centering
		\includegraphics[width=1\linewidth]{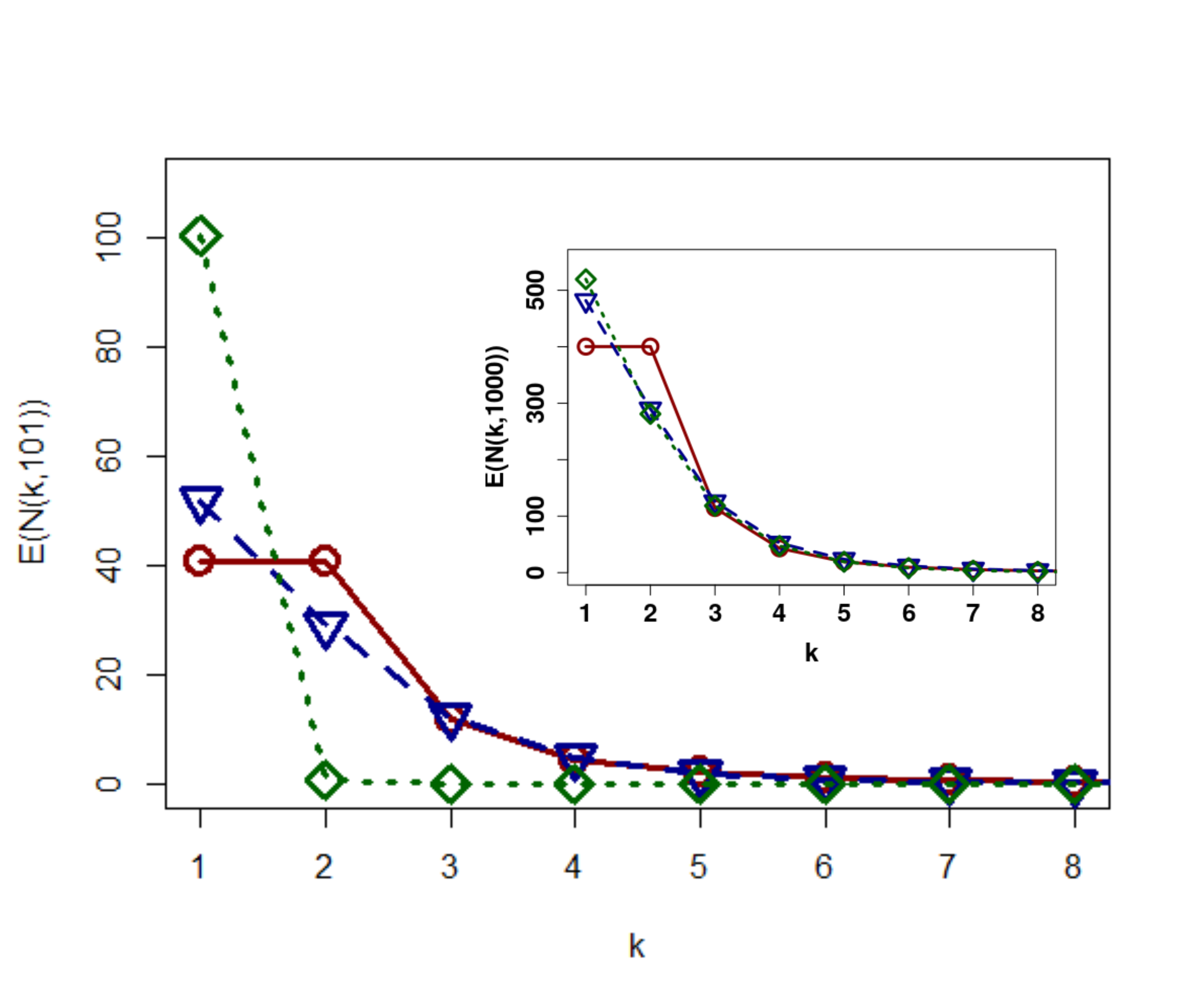}
	\end{subfigure}
	~ 
	\begin{subfigure}{.48\textwidth}
		\centering
		\includegraphics[width=1\linewidth]{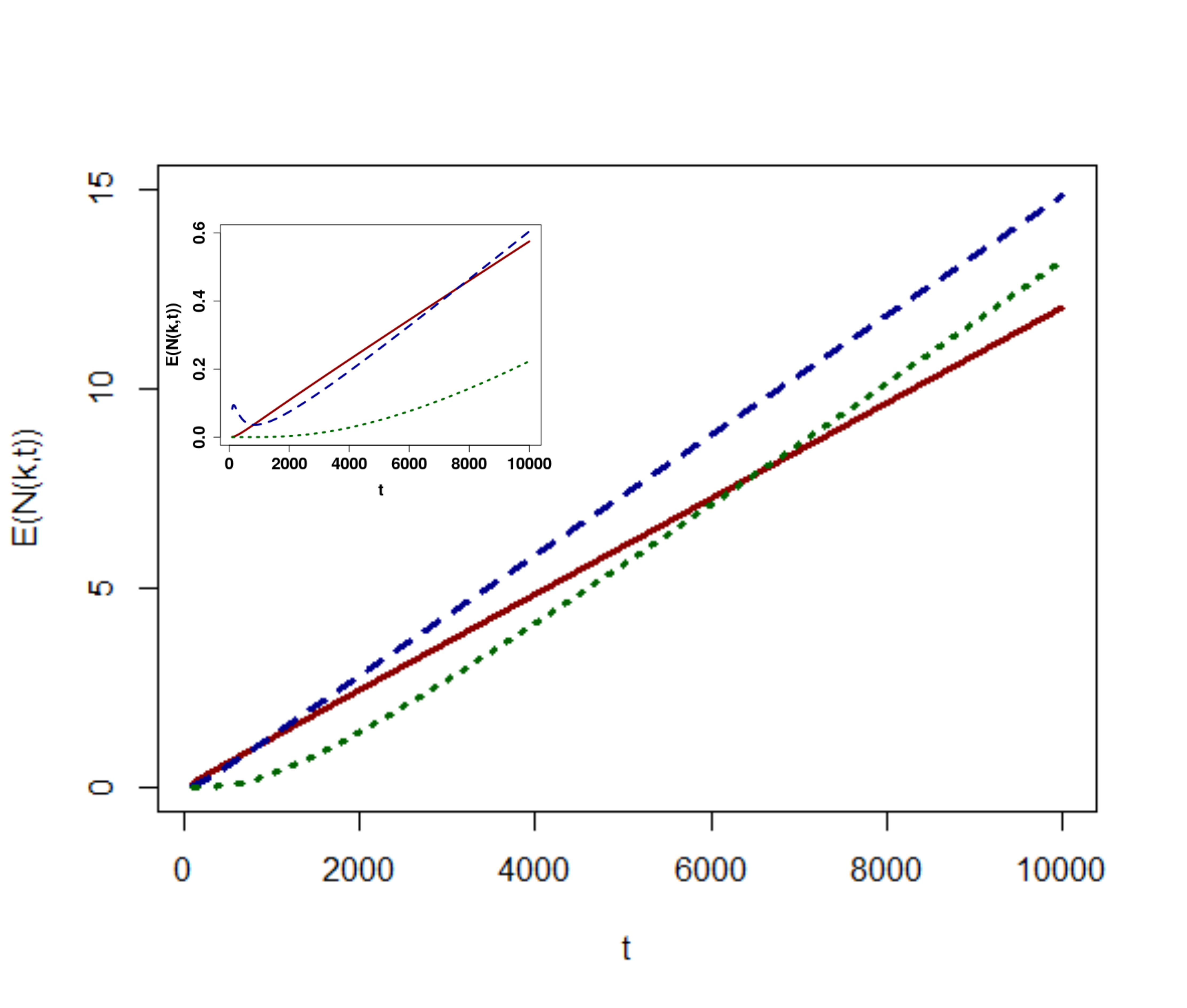}
	\end{subfigure}
\caption*{Fig.1a (left panel): The expected number of vertices with degree $k$  versus $k$ are shown, for $t=101$, $p=0.5$, and different sizes of the window, $l=1$ (red, solid line $\circ$), $l=10$ (blue, dashed line $\triangledown$) and $l=100$ (green, dotted line $\lozenge$). \textit{Inset:} the same picture but here $t=1000$.
Fig.1b (right panel): The expected number of vertices with degree $k$  versus $t$ are shown, for $k=10$, $p=0.5$, and different sizes of the window, $l=1$ (red, solid line), $l=10$ (blue, dashed line) and $l=100$ (green, dotted line). \textit{Inset:} the same picture but here $k=20$.}
\end{figure}

As far as the role of $p$ is concerned, we note that it only affects the nodes with low degree (see Fig. 2a). When $p$ is large, the uniform attachment mechanism helps links to nodes of lower degree but its role tends to disappear for nodes of higher degree. Again, the shape of the curves remains the same if we change $t$ but the scale changes (see Fig. 2a and its inset).
Fig. 2b illustrates $\bbE\left[N(10,t)\right]$ as a function of $t$ for different values of $p$. We observe that for small values of $t$, the curves of $\bbE\left[N(10,t)\right]$ decrease, and after some time they start to increase. That is just an effect of the initial condition, $\bbE\left[N(10,10)\right]=1$. At the beginning of the process there are few vertices, and only one with degree 10, $v_0$. Then, $v_0$ can easily increase its degree by receiving a new link.  A stronger presence of uniform attachment rule (greater $p$) decreases the speed of the growth of the degree of the nodes. Note that for $p=1$, pure uniform attachment, $\bbE\left[N(10,t)\right]$ remains constant when $t$ grows. For the case $k=20$ (see inset of Fig. 2b), the initial condition is $\bbE\left[N(20,10)\right]=0$. Here we observe that for small values of $t$, the curves of $\bbE\left[N(20,t)\right]$ increase reaching a local maximum, then they decrease and after some time they start to increase again. This behavior of the curves for small values of $t$ is a consequence of the zero value of the initial condition, and of the fact that $l=10$. In this range of $t$, $\bbE\left[N(20,t)\right]$ is only affected by the preferential attachment rule.

Fig. 1a, 1b, 2a and 2b suggest a role of the two attachment rules for the growth rate of low degree nodes. During the initial development of the network, lower degree nodes receive more links thanks to the presence of the window. However, the considered examples suggest a marginal asymptotic role of the uniform attachment in making significant changes to long term dynamics.

\begin{figure}[H]
	\centering
	\begin{subfigure}{.48\textwidth}
		\centering
		\includegraphics[width=1\linewidth]{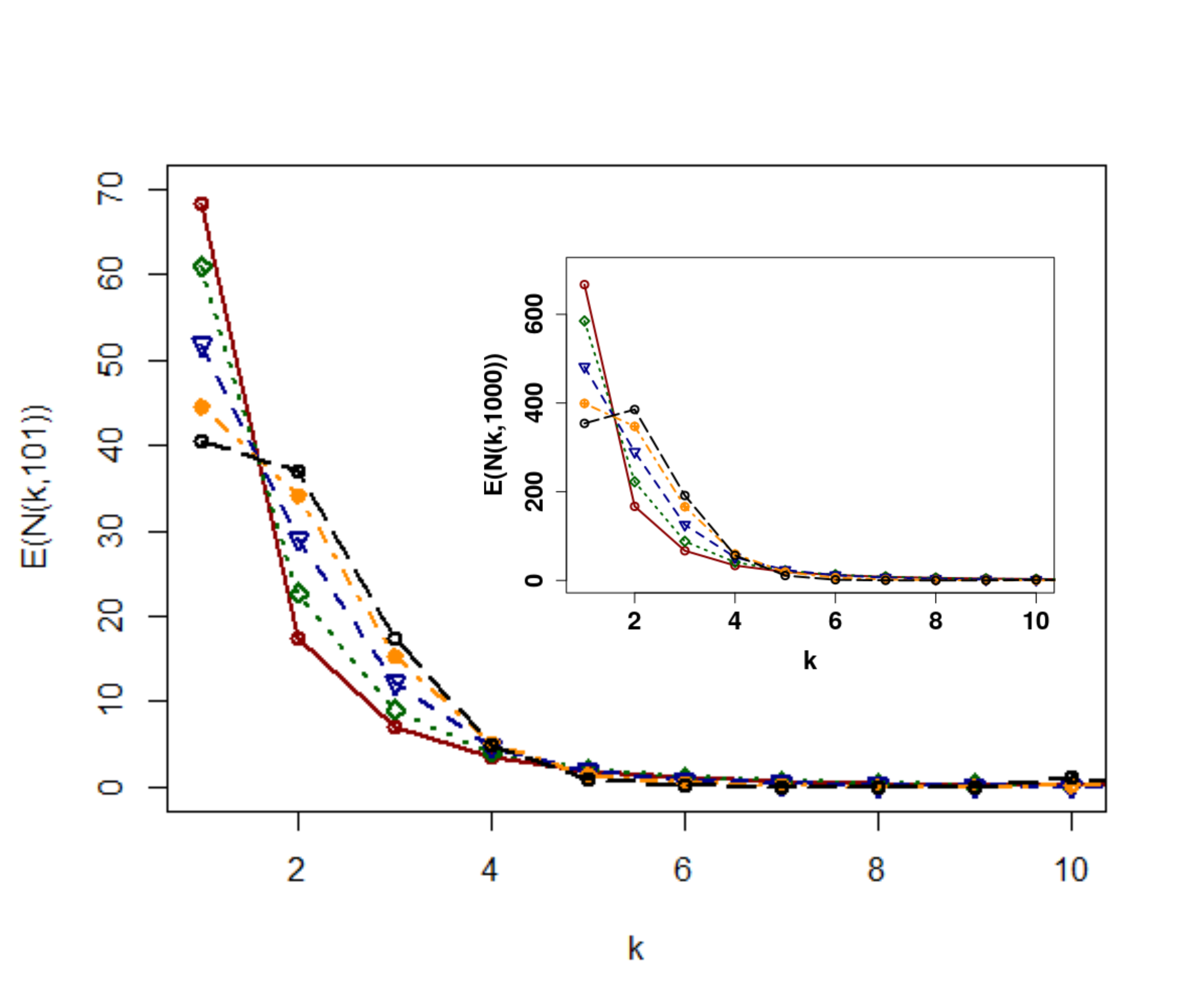}
	\end{subfigure}
	~ 
	\begin{subfigure}{.48\textwidth}
		\centering
		\includegraphics[width=1\linewidth]{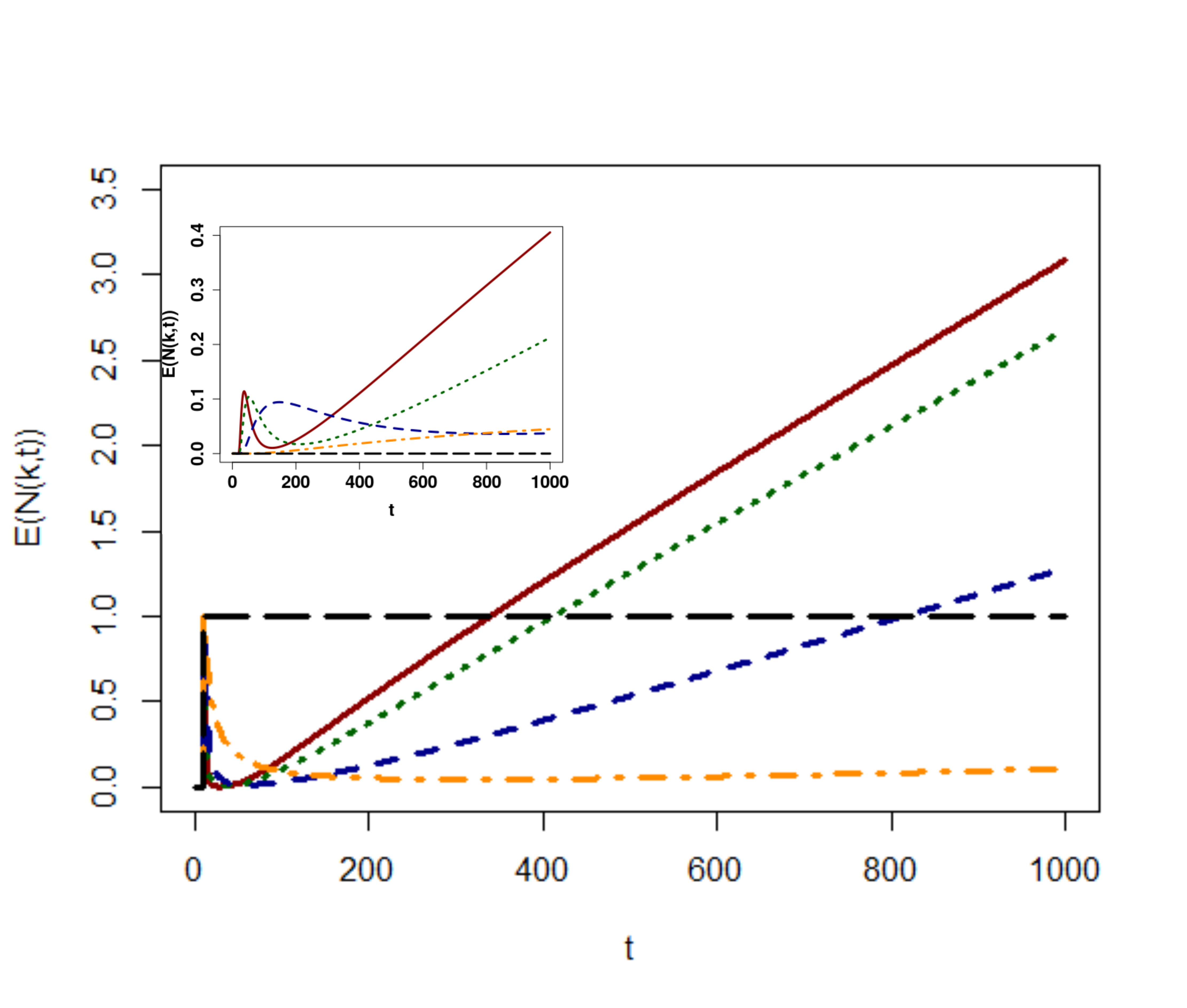}
	\end{subfigure}
\caption*{Fig.2a (left panel): The expected number of vertices with degree $k$ versus $k$ are shown, for $t=101$ and $l=10$, and probabilities of window, $p=0$ (red, solid line $\bullet$), $p=0.2$ (green, dotted line $\lozenge$), $p=0.5$ (blue, dashed line $\triangledown$), $p=0.8$ (orange, dashed dot $\blacklozenge$) and $p=1$ (black, dashed line $\circ$). \textit{Inset:} the same picture but here $t=1000$.
Fig.2b (right panel): The expected number of vertices with degree $k$ versus $t$ are shown, for $k=10$, $l=10$, and probabilities of window, $p=0$ (red, solid line), $p=0.2$ (green, dotted line), $p=0.5$ (blue, short dashed line), $p=0.8$ (orange, dash dot line) and $p=1$ (black, long dashed line). \textit{Inset:} the same picture but here $k=20$.}
	\label{figSNG2}
\end{figure}

\subsection{Asymptotic degree distribution}\label{FWS}
By using Theorem \ref{main1}, we herein illustrate the role of the UPA model parameters on the shape of the asymptotic degree distribution. In the case of the classical Barab\'asi-Albert model, the preferential attachment rule determines a scale free behavior of the degree distribution while the uniform attachment implies a uniform asymptotic distribution. To illustrate the tail behavior of the UPA model we  use (\ref{asymDegree}) and (\ref{asymDegree2}), given in Corollary \ref{remarkTeo}. 

In Fig. 3a and 3b we compare the analytical results obtained by (\ref{pkll1}), and the asymptotic approximations obtained by (\ref{asymDegree2}), only taking the first term. In the inset the comparison is with the asymptotic approximations (\ref{asymDegree2}), but taking account both the first and the second term.  More precisely, in  Fig. 3a we highlight the effect of the presence of windows by keeping its size fixed to $l=100$ while we vary the probability of the presence of windows, $p$. In Fig. 3b we highlight the effect of the size of the windows by keeping fixed $p=0.5$ and varying the size of the windows, $l$. In Fig. 3a we observe that starting from $\log   k=3$ (or $\log   k=3.5$), for $p=0.8$ (or $p=0.5$), the analytical curves fit the corresponding linear straight lines of the asymptotic approximations  (in $log-log$ scale). However, different weights for the uniform attachment change in a significant way the slope of the straight lines in the $log-log$ plot of Fig. 3a. Accounting both the first and second term in (\ref{asymDegree2}), we get a good approximation of (\ref{pkll1}) from $\log   k=2$ (or $\log   k=2.5$), for $p=0.8$ (or $p=0.5$),  (see inset of Fig. 3a). We draw the solid lines in the inset (asymptotic results taking the first and the second term) only when the asymptotic approximation starts to hold.
Results of Corollary \ref{remarkTeo}, formulae (\ref{pkl1}) and (\ref{pkll1}) show a limited role of the windows size on the asymptotic degree distribution. Only the coefficient $C_{p,l}$ in (\ref{pkll1}) depends on $l$ and this dependency is weak. Fig. 3b illustrates this property. Note that it becomes impossible to distinguish different solid lines corresponding to different values of $l$.

\begin{figure}[H]
	\centering
	\begin{subfigure}{.48\textwidth}
		\centering
		\includegraphics[width=1\linewidth]{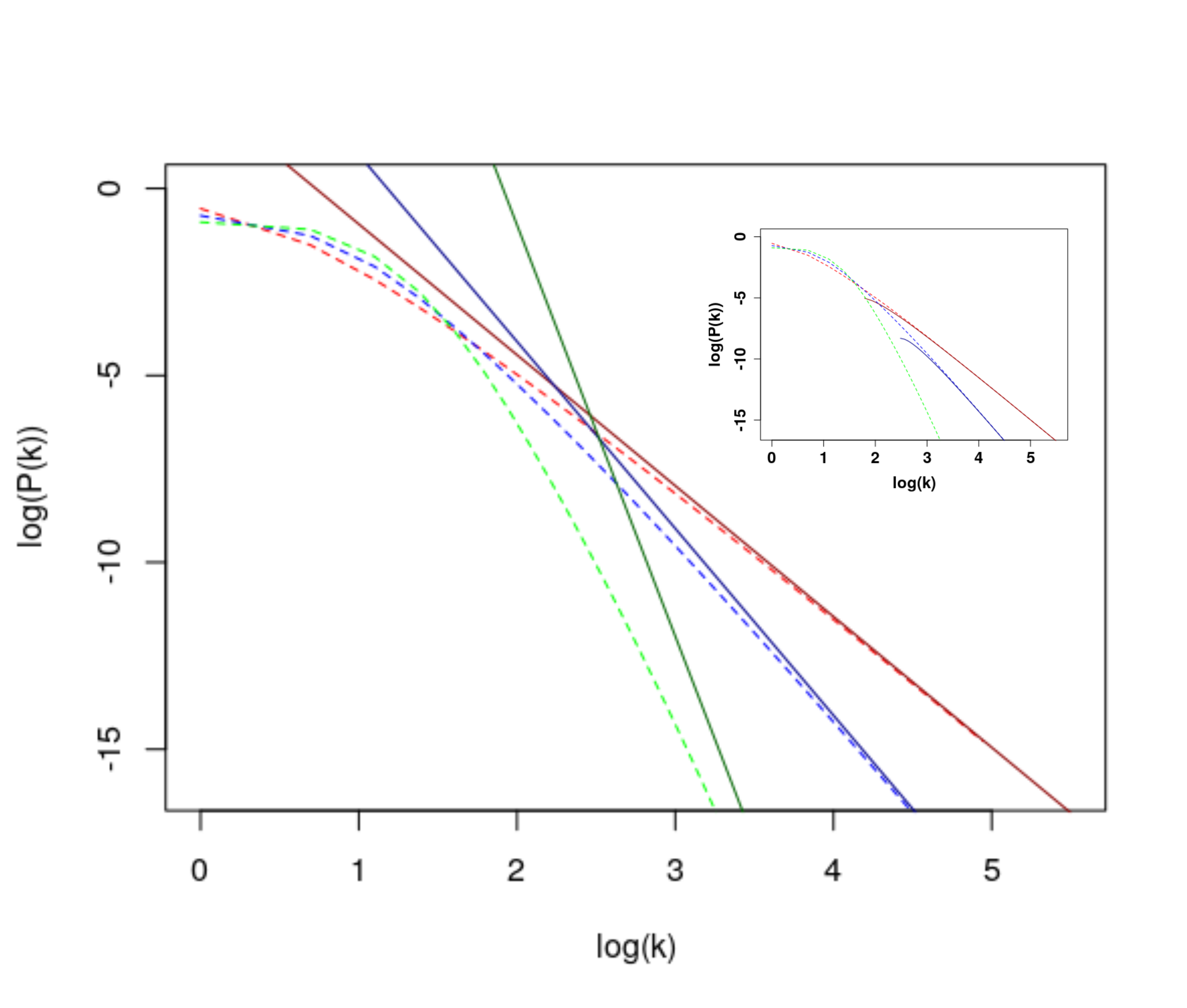}
	\end{subfigure}
	~ 
	\begin{subfigure}{.48\textwidth}
		\centering
		\includegraphics[width=1\linewidth]{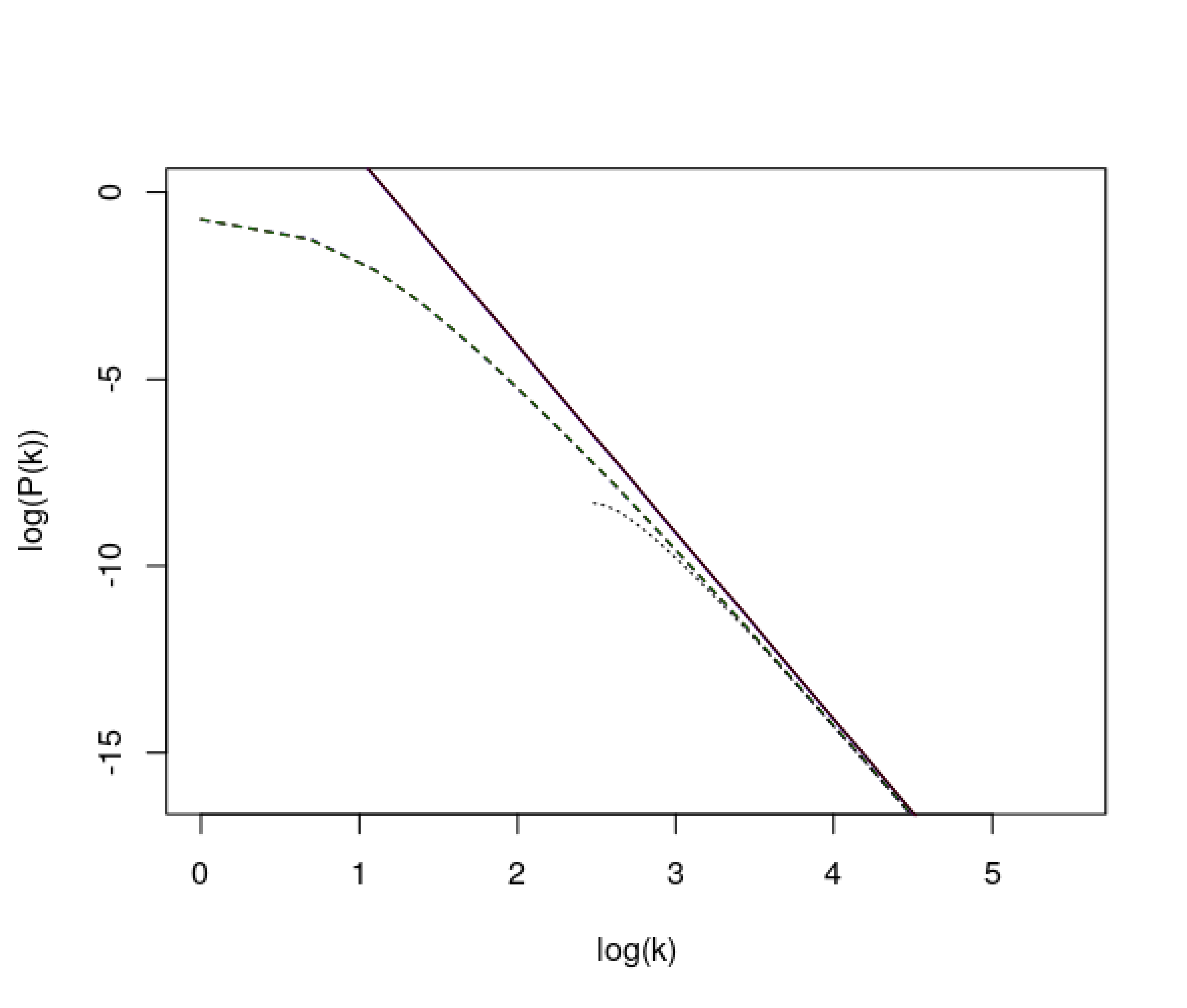}
	\end{subfigure}
\caption{Fig.3a (left panel): The  proportion of vertices with degree $k$  versus $k$ are shown in $log-log$ scale, for $l=100$ and different probabilities of window, $p=0.2$ (green), $p=0.5$ (blue) and $p=0.8$ (red). The dashed lines are analytical results, while the solid lines represent the asymptotic values taking only the first term in (\ref{asymDegree2}). \textit{Inset:} the same picture but the solid lines here refer to the asymptotic values taking the first and the second term in (\ref{asymDegree2}).
Fig.3b (right panel): $\log(P(k))$ vs. $\log(k)$.  Probability of window $p=0.5$ and different sizes of the window, $l=10$ (blue) and $l=100$ (green).
Again dashed lines are analytical results, while the solid lines represent the asymptotic values taking only the first term in (\ref{asymDegree2}).}
	\label{figSNG3}
\end{figure}

\subsection{Case of linear windows size}\label{LWS}
Our results show the impossibility to destroy the scale free behavior by introducing a second uniform attachment rule. However, it might be hypothesided that this result is determined by the fixed size of the windows. When the network increases its size, a decrement of the contribution of the uniform attachment to the global dynamics seems intuitive. For this reason we decided to investigate the dynamics of a network governed by the same rules as the one studied in the previous sections but with  windows size that grows linearly with time (and hence with the size of the network).

Unfortunately, this model cannot be studied analitically with the methodology used for the case of fixed size of the window. Therefore, we used simulations to analyze its asymptotic degree distribution. In order to choose the dimension of the network for this study, we compared analytical results with simulations for the UPA model with fixed size window. This study combined with some memory capacity restrictions, suggested to take  $n\geq100000$ as a value for which analytical results fit simulations well. 
Hence, we used $n=300000$ to analyze the asymptotic degree distribution of the network with linear size of the windows. Herein we focus on the effect of the size of the windows by keeping fixed $p=0.8$ and letting $l=\alpha n$, for different values of $\alpha$. In addition, we compare these results with the analytical results obtained by (\ref{pkll1}) for the case $l=100$, see Fig. 4. We do not observe a remarkable difference between the case of fixed and linear windows size.  Hence, we conclude that the scale free behavior is mantained also in the case of linear windows size. 

\begin{figure}[H]
		\centering
		\includegraphics[width=0.45\linewidth]{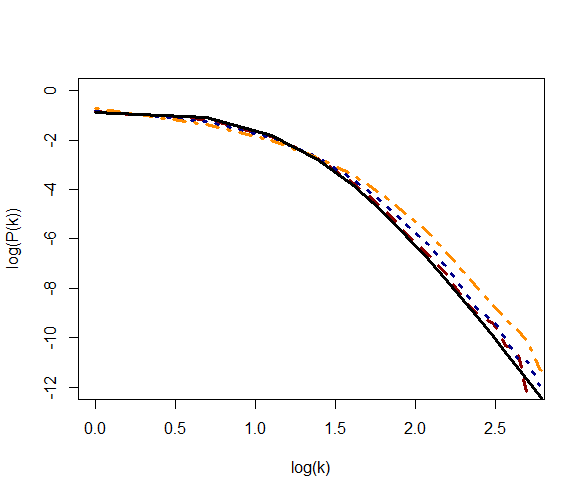}
\caption*{Fig.4:  The  proportion of vertices with degree $k$  versus $k$ are shown in $log-log$ scale, for $p=0.8$ and different sizes of the window, $l=\alpha n$, for $\alpha=0.2$ (red, long dashed line), $\alpha=0.5$ (blue, short dashed line) and $\alpha=0.8$ (orange, dash dot line). Moreover, the curve for $l=100$ (black, solid line) is also plotted taking the analytical values obtained by (\ref{pkll1}).
}\label{figSNG4}
\end{figure}

\section{Discussion and concluding remarks}
In this paper we propose a new model to describe the growth of networks in which the preferential attachment co-exists with the preferential attachment rule. Hypothesizing that the uniform attachment rule works for fixed windows size  we were able to perform an analytical study of the mean number of nodes characterized by a fixed degree. Furthermore, analytical results were proved for the degree distribution. 
From our results we can conclude that the scale free behavior does not disappear by merging preferential and uniform attachment rules. Furthermore, we show that different weights between preferential and uniform attachment rules change the exponent of the power law while the value of $l$ affects only the intercept of the $log-log$ lines and the rate of convergence to the scale free behavior. That is, depending on the value of $l$, it changes the starting value of $t$ to observe the power law behavior. This is due to the fact that when there is a window, some connections are ``wasted'' and do not contribute to the rich-get-richer mechanism.
The obtained analytical results are strongly related with the finiteness of the windows size, requested by Theorem \ref{main1}. This finiteness is indeed required. In fact, if the windows size is finite, then at each step an old node is always kept out from the window, so the degree distribution inside the window is bounded and controllable. Hence, Theorem \ref{main1} still holds when the windows shape is unconventional but finite. However, the simulation study of the model in presence of linearly growing windows size shows that the scale free behavior is preserved also in presence of unbounded windows size. This result suggest a robustness of the scale free feature that explains the incredible number of natural instances in which the phenomenon is observed. We conclude this paper conjecturing the necessity to investigate the introduction of alternative attachment rules beside the preferential attachment. It seems interesting to understand which type of attachment could destroy the asymptotic power law behavior of the degree distribution.

\section{Appendix}\label{A}
Here we complete the proof of Theorem \ref{main1}, that is, the proof of steps (2) and (3). We use the results of subsection \ref{recursiveENt}. We divide this section in two parts,  $l = 1$ and $l > 1$. \\

\textbf{PART 1. $l = 1$}\\

\begin{lemma} \label{lemmaconvk}
	In the UPA Model with fixed window size $l=1$,  ${P}(k)$, $k\geq1$, exists.
\end{lemma}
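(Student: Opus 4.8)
The plan is to prove existence of ${P}(k)=\lim_{t\to\infty}\bbE[N(k,t)]/t$ by induction on $k$, feeding the recursive identities of Lemma \ref{propA1} into a single deterministic convergence lemma for recursions of the form $x_{t+1}=\bigl(1-\tfrac{c}{t}\bigr)x_t+\gamma_t$ with $c\ge 0$ constant and $\gamma_t\to\gamma$. The point is that every branch of (\ref{l1theequations}) has exactly this shape: for $k=1$ with $c=\tfrac{1-p}{2}$ and $\gamma_t\equiv 1-p$; for $k=2$ with $c=1-p$ and $\gamma_t=\tfrac{1-p}{2t}\bbE[N(1,t)]+p$; and for $k>2$ with $c=\tfrac{(1-p)k}{2}$ and $\gamma_t=\tfrac{(1-p)(k-1)}{2t}\bbE[N(k-1,t)]$. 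The a priori bound needed to run the argument is free: at time $t$ the graph has $t+1$ vertices, so $0\le N(k,t)\le t+1$ and hence $0\le \bbE[N(k,t)]\le t+1$.

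The heart is the auxiliary lemma: if $x_{t+1}=(1-\tfrac{c}{t})x_t+\gamma_t$ for $t\ge t_0$, with $0\le x_t\le Ct$, $c\ge0$, and $\gamma_t\to\gamma$, then $x_t/t\to \gamma/(1+c)$. I would prove it by setting $u_t:=x_t/t$ and using the exact identity $\tfrac{t}{t+1}(1-\tfrac{c}{t})=1-\tfrac{c+1}{t+1}$, which turns the recursion into $u_{t+1}-u_t=\tfrac{1}{t+1}\bigl(\gamma_t-(c+1)u_t\bigr)$. Since $(u_t)$ is bounded, $\ell^-:=\liminf_t u_t$ and $\ell^+:=\limsup_t u_t$ are finite and $|u_{t+1}-u_t|\to0$; a trapping argument then pins both to $\gamma/(1+c)$: given $\epsilon>0$, choose $T$ so large that $|\gamma_t-\gamma|$ is small and $|u_{t+1}-u_t|<\epsilon$ for $t\ge T$; whenever $u_t>\gamma/(1+c)+2\epsilon$ the displayed identity forces $u_{t+1}<u_t$ with a decrement of order $1/(t+1)$, whose divergent sum is incompatible with $u_t\ge 0$ unless $u_t$ eventually enters, and then never leaves, an $O(\epsilon)$-neighbourhood of $\gamma/(1+c)$; the symmetric statement controls $\ell^-$, and letting $\epsilon\downarrow0$ gives the limit. (Equivalently one may expand $x_t=x_{t_0}\prod_{s=t_0}^{t-1}(1-\tfrac cs)+\sum_{s=t_0}^{t-1}\gamma_s\prod_{r=s+1}^{t-1}(1-\tfrac cr)$, use $\prod_{r=s+1}^{t-1}(1-\tfrac cr)=(s/t)^{c}(1+o(1))$ and the Riemann-sum estimate $\sum_{s\le t}s^{c}\sim t^{c+1}/(c+1)$.)

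Granting the lemma, the induction is immediate. For $k=1$ the forcing term is constant, so the lemma gives ${P}(1)$. For $k=2$, $\gamma_t=\tfrac{1-p}{2t}\bbE[N(1,t)]+p\to\tfrac{1-p}{2}{P}(1)+p$ by the $k=1$ case, so the lemma gives ${P}(2)$. For $k>2$, assuming ${P}(k-1)$ exists, $\gamma_t=\tfrac{(1-p)(k-1)}{2t}\bbE[N(k-1,t)]\to\tfrac{(1-p)(k-1)}{2}{P}(k-1)$, and the lemma yields ${P}(k)$; this closes the induction. (Solving $(1+c){P}(k)=\lim_t\gamma_t$ explicitly recovers the formulas (\ref{pkl1}), which is the content of Lemma \ref{l1asppl}.) The only genuine difficulty is the auxiliary deterministic recursion lemma — making the trapping (or product-estimate) argument rigorous from just $\gamma_t\to\gamma$ and the crude linear bound; everything downstream is bookkeeping driven by Lemma \ref{propA1}.
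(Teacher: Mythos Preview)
Your proof is correct and takes a cleaner route than the paper's. You extract one deterministic lemma for recursions $x_{t+1}=(1-c/t)x_t+\gamma_t$ with $\gamma_t\to\gamma$, giving $x_t/t\to\gamma/(1+c)$, and apply it inductively across all $k$ via the exact identity $\tfrac{t}{t+1}(1-\tfrac{c}{t})=1-\tfrac{c+1}{t+1}$. The paper instead handles $k=1$ and $k>1$ by separate ad hoc arguments: for $k=1$ it shows $\bbE[N(1,t)]/t$ is eventually monotone by a case analysis on whether the running $\sup$/$\inf$ ever crosses the fixed point $2(1-p)/(3-p)$; for $k>1$ it runs a bisection on $S_T-I_T=\sup_{t\ge T}-\inf_{t\ge T}$, halving this gap repeatedly through an auxiliary affine function $g(\epsilon)$ (or $h(\epsilon)$ for $k>2$). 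Both routes are trapping arguments at heart, but yours packages the idea once, avoids the case split, and yields the limiting value $\gamma/(1+c)$ directly---so Lemma \ref{l1asppl} falls out of the same computation, and the argument would also transfer verbatim to the $l>1$ case of Lemma \ref{l-lemmaconv1} once $H_k(t)\to H_k$ is known. The only place your sketch is thin is the ``enters and never leaves an $O(\epsilon)$-neighbourhood'' step of the trapping argument; the product-expansion alternative you mention (using $\prod_{r=s+1}^{t-1}(1-\tfrac cr)\sim(s/t)^c$ and a Riemann sum) is the easier one to make fully rigorous.
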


\begin{proof}
Let
\begin{equation}
S_T := \sup_{t \ge T}\frac{\bbE[{N}(k,t)]}{t} \text{ ~ and ~ } I_T := \inf_{t \ge T}\frac{\bbE[{N}(k,t)]}{t}.
\end{equation}
We divide the proof in two cases, $k=1$ and $k>1$. \\
\textit{Case $k=1$:}
We start by observing that the sequence of frequencies $\left( \bbE({N}(1,t))/t \right)_{t\geq l}$ is bounded, so we only need to prove that it is monotone.
By Lemma \ref{propA1} we obtain that
\begin{equation}
\frac{\bbE[{N}(1,t+1)]}{t+1} \lesseqqgtr \frac{\bbE[{N}(1,t)]}{t} \Leftrightarrow \frac{2(1-p)}{3-p} \lesseqqgtr \frac{\bbE[{N}(1,t)]}{t}.
\end{equation}

Let us now consider the following events:
\begin{equation}
A := \left\{ \exists T \text{ s.t. } S_T := \sup_{t \ge T}\frac{\bbE[{N}(1,t)]}{t} \le \frac{2(1-p)}{3-p} \right\} ,
\end{equation}

\begin{equation}
B := \left\{ \exists T' \text{ s.t. } I_{T'} := \inf_{t \ge T'}\frac{\bbE[{N}(1,t)]}{t} \ge \frac{2(1-p)}{3-p} \right\} 
\end{equation}
and
\begin{equation}
C := \left\{ \forall T ~ ~ S_T > \frac{2(1-p)}{3-p} > I_T \right\} .
\end{equation}
Note that the events $A$ and $B$ are mutually exclusive since their bound is determined by the same constant, while $C=(A\cup B)^c$. Therefore, $A$, $B$ and $C$ form a partition. 

Next we are going to prove monotonicity when $A$ or $B$ holds. Moreover, the union of these events  always happen. 
In fact, note that if $A$ holds
then $\bbE[{N}(1,t)]/t \le S_T \le 2(1-p)/(3-p)$ for $t \ge T$. Thus, the sequence $\left(\bbE[{N}(1,t)]/t \right)_t$ is monotone increasing. 
In the same way, if $B$ holds, 
$\bbE[{N}(1,t)]/t \ge I_{T'} \ge 2(1-p)/(3-p)$ for $t \ge T'$. Thus, the sequence $\left( \bbE[{N}(1,t)]/t \right)_t$ is monotone decreasing.

Now observe that if $C$ holds,
	for every $T\geq0$, $S_T$ is always larger than $2(1-p)/(3-p)$ and $I_T$ smaller than $2(1-p)/(3-p)$. This means that if we are in situation C,  the sequence $\left(\bbE[\overline{N}(1,t)]/t\right)_t$ should cross the value $2(1-p)/(3-p)$ infinitely many times. 

Assume that $C$ holds, then	by the definition of $\inf$ we can state that there exists $t'\geq l$ such that
	\begin{equation*}
	\label{absu1}
	 \bbE[{N}(1,t')] < t'\frac{2(1-p)}{3-p}.
	\end{equation*}
	Multiplying both sides by $\left(1-(1-p/2t')\right)$ and adding (1-p), we obtain by Lemma \ref{propA1}
	 \begin{equation*}
	 \bbE[{N}(1,t'+1)] < t' \left( 1-\frac{1-p}{2t'} \right) \frac{2(1-p)}{3-p} + (1-p)=\frac{2(1-p)}{3-p}(t'+1).
	 \end{equation*}
	
Therefore, if $\bbE[N(1,t')]/t' < 2(1-p)/(3-p)$ then we also have that $\bbE[N(1,t'+1)]/(t'+1) < 2(1-p)/(3-p)$. In other words, if at a certain point the sequence $\left( \bbE[N(1,t)]/t \right)_t$ is on the left of $2(1-p)/(3-p)$ then it will remain on the left of $2(1-p)/(3-p)$ forever, 
	so, the sequence $\left( \bbE[{N}(1,t)]/t \right)_t$ will cross the value $2(1-p)/(3-p)$ a finite number of times, which is a contradiction.




\textit{Case $k>1$:}
By Lemma \ref{propA1} we obtain that

\begin{equation}
\frac{\bbE[{N}(k,t+1)]}{t+1} \gtreqqless 
\begin{cases}
\frac{\bbE[{N}(2,t)]}{t} \Leftrightarrow \frac{1}{2-p}\left( \frac{1-p}{2} \frac{\bbE[{N}(1,t)]}{t} +p \right) \gtreqqless \frac{\bbE[{N}(2,t)]}{t},&\text{ if } k=2 \\
\frac{\bbE[{N}(k,t)]}{t} \Leftrightarrow \frac{(1-p)(k-1)}{2+(1-p)k}\left(\frac{\bbE[{N}(k-1,t)]}{t}\right)  \gtreqqless \frac{\bbE[{N}(k,t)]}{t},&\text{ if } k>2.\\
\end{cases} 
\end{equation}

Since we have already proved that $P(1)$ exists, then for each $\epsilon\in\mathbb{R}$, let us now define
\begin{equation}\label{g}
g(\epsilon) := \frac{1}{2-p}\left(\frac{1-p}{2} ({P}(1) + \epsilon) + p \right).
\end{equation}
Next we are going to write the proof when $k=2$. For $k>2$ we perform exactly the same reasoning, but replacing $g(\epsilon)$ by
\begin{equation}
h(\epsilon) := \frac{(1-p)(k-1)}{2+(1-p)k}({P}(k-1) + \epsilon).
\end{equation}


Thus, in order to prove that the sequence $\left(\bbE[N(2,t)]/t\right)_t$ converges, we are going to prove that
\begin{equation}
\label{k2limit}
\lim_{T \rightarrow \infty}(S_T-I_T) = 0 .
\end{equation}
We begin with an arbitrary time $T_1$. Then, by definition, $I_{T_1} \le S_{T_1}$, so, there exists the midpoint 
$
M_{T_1}:=(S_{T_1} + I_{T_1})/2.
$
We consider two cases $M_{T_1}\geq g(0)$ and $M_{T_1}<g(0)$, and what we are going to show is that in both cases there exists a time $T_2>T_1$ such that $S_{T_2} - I_{T_2} \le (S_{T_1} - I_{T_1})/2$. Note that using this repeatedly we obtain (\ref{k2limit}), and thus we have the convergence.

It is not difficult to verify that $g(\epsilon)$ is the equation of a line with positive slope, so it is a continuous and growing function. Then  
if $M_{T_1} \ge g(0)$, there exists a $\delta \ge 0$ such that 
$
M_{T_1} = g(\delta).
$
Moreover, by definition of $I_{T_1}$, there is a $t'>T_1$ such that 
\begin{equation}\label{Et'}
\bbE[{N}(2,t')] \le t'g(\delta),
\end{equation}
and this could happen either for a limited number or infinitely many $t'$. If this happens only a limited number of times, then there exists a $T_2 > T_1$ such that $I_{T_2} \ge g(\delta)$, so, $S_{T_2} - I_{T_2} \le \frac{1}{2}(S_{T_1} - I_{T_1})$. Otherwise, if this happens for infinitely many $t'$, then by Lemma \ref{propA1} and multiplying both sides of (\ref{Et'}) by $[1-(1-p)/t')]$ and adding $(1-p)\bbE[N(1,t)]/2t'+p$, we get that 
\begin{equation}\label{39}
\bbE[N(2,t'+1)]\leq \left(1-\frac{1-p}{t'}\right)t'g(\delta)+\frac{1-p}{2t'}\bbE[N(1,t')] + p.
\end{equation}

Since ${P}(1)$ exists, then by definition,
\begin{equation}\label{limitl1p1def}
\forall \beta > 0 ~ \exists T \text{ s.t. } \forall t \ge T ~ ~ {P}(1)-\beta < \frac{\bbE[N(1,t)]}{t} < {P}(1)+\beta .
\end{equation}
Using this we have that there exists a $T'$, such that for any $t' \ge T'$, the right side of (\ref{39}) is less than or equal 
%
\begin{align*}
 & \left(1-\frac{1-p}{t'}\right)t'g(\delta) + \frac{1-p}{2}({P}(1)+\delta) + p\\
=& t'g(\delta) - (1-p)g(\delta) + \frac{1-p}{2}({P}(1)+\delta) + p\\
=& t'g(\delta) - (1-p)g(\delta) + (2-p)g(\delta)=(t'+1)g(\delta),
\end{align*}
where the second equality follows by (\ref{g}).
Thus, we have proved  that for large values of $t'$, 
$
\bbE[{N}(2,t')]/t' \le g(\delta) ~ \Leftrightarrow ~ \bbE[{N}(2,t'+1)]/(t'+1) \le g(\delta).
$
Hence, there exists a $T_2>t'>T_1$ such that $S_{T_2} \le g(\delta)$, then 
$
S_{T_2} - I_{T_2} \le (S_{T_1} - I_{T_1})/2 .
$

In the same way we analyze the case $M_{T_1} < g(0)$. If this condition is verified, then there exists a $\delta \ge 0$ such that 
$
M_{T_1} = g(-\delta) .
$
Furthermore, by definition of $S_{T_1}$, there is a $t'$ such that 
\begin{equation}\label{nt'2}
\bbE[N(2,t')] \ge t'g(-\delta) ,
\end{equation}
what could happen for either a limited number or infinitely many $t'$. If this happens only a limited number of times, then there exists a $T_2 > T_1$ such that $S_{T_2} \le g(-\delta)$, so $S_{T_2} - I_{T_2} \le \frac{1}{2}(S_{T_1} - I_{T_1})$. Otherwise, if this happens for infinitely many $t'$, then by Proposition \ref{propA1} and multiplying both sides of (\ref{Et'}) by $[1-(1-p)/t')]$ and adding $(1-p)\bbE[N(1,t)]/2t'+p$, we get that 
\begin{equation}\label{45}
\bbE[N(2,t'+1)]\geq \left(1-\frac{1-p}{t'}\right)t'g(-\delta)+\frac{1-p}{2t'}\bbE[N(1,t')] + p.
\end{equation}
Moreover, by (\ref{limitl1p1def}), there exists a $T'$ such that for $t' \ge T'$, the right side of (\ref{45}) is greater than or equal
\begin{align*}
 & \left(1-\frac{1-p}{t'}\right)t'g(-\delta) + \frac{1-p}{2}({P}(1)-\delta) + p\\
=& t'g(-\delta) - (1-p)g(-\delta) + \frac{1-p}{2}({P}(1)-\delta) + p\\
=& t'g(-\delta) - (1-p)g(-\delta) + (2-p)g(-\delta)=(t'+1)g(-\delta),
\end{align*}
where the second equality follows by (\ref{g}).
Then, we have proved that for large values of $t'$, 
$
\bbE[N(2,t')]/t' \ge g(-\delta) ~ \Leftrightarrow ~ \bbE[N(2,t'+1)]/(t'+1) \ge g(-\delta).
$
Hence, there exists a $T_2$ such that $I_{T_2} \ge g(-\delta)$, then 
$
S_{T_2} - I_{T_2} \le (S_{T_1} - I_{T_1})/2 .
$ 
\end{proof}




\begin{lemma}\label{l1asppl}
	In the UPA model with fixed window size $l=1$, ${P}(k)$, $k\geq1$, is given by (\ref{pkl1}).
\end{lemma}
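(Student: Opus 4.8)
\textbf{Proof proposal for Lemma \ref{l1asppl}.}

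The plan is to solve the recursion of Lemma \ref{propA1} in the limit, using that the limit $P(k)=\lim_{t\to\infty}\bbE[N(k,t)]/t$ exists by Lemma \ref{lemmaconvk}. First I would divide each of the three cases of \eqref{l1theequations} by $t+1$ and let $t\to\infty$. For $k=1$, writing $\bbE[N(1,t+1)]/(t+1) = \frac{t}{t+1}\bigl(1-\frac{1-p}{2t}\bigr)\bbE[N(1,t)]/t + \frac{1-p}{t+1}$ and passing to the limit gives $P(1) = P(1) - \frac{1-p}{2}P(1) + (1-p)$, hence $P(1) = \frac{2(1-p)}{3-p}$, which matches the first line of \eqref{pkl1}. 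For $k=2$, the same manipulation yields $P(2) = P(2) - (1-p)P(2) + \frac{1-p}{2}P(1) + p$, so $P(2) = \frac{1}{2-p}\bigl(\frac{1-p}{2}P(1) + p\bigr)$; substituting the value of $P(1)$ just found gives $P(2) = \frac{(1-p)^2}{(2-p)(3-p)} + \frac{p}{2-p}$, the second line of \eqref{pkl1}.

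For $k>2$, the limiting recursion is $P(k) = P(k) - \frac{(1-p)k}{2}P(k) + \frac{(1-p)(k-1)}{2}P(k-1)$, i.e.
\begin{equation*}
P(k) = \frac{(1-p)(k-1)}{2 + (1-p)k}\,P(k-1) = \frac{k-1}{k + \frac{2}{1-p}}\,P(k-1).
\end{equation*}
Iterating this from $k$ down to $3$ gives a telescoping product
\begin{equation*}
P(k) = P(2)\prod_{j=3}^{k}\frac{j-1}{\,j + \frac{2}{1-p}\,} = P(2)\,\frac{(k-1)!/2}{\ \Gamma\!\bigl(k+1+\frac{2}{1-p}\bigr)\big/\Gamma\!\bigl(3+\frac{2}{1-p}\bigr)\ }.
\end{equation*}
I would then rewrite this in terms of the Beta function using $B(x,y)=\Gamma(x)\Gamma(y)/\Gamma(x+y)$: the factor $\Gamma(k)\Gamma\!\bigl(1+\frac{2}{1-p}\bigr)/\Gamma\!\bigl(k+1+\frac{2}{1-p}\bigr) = B\!\bigl(k,1+\frac{2}{1-p}\bigr)$ appears, and collecting the remaining constant factors $\Gamma\!\bigl(3+\frac{2}{1-p}\bigr)/\bigl(2\,\Gamma\!\bigl(1+\frac{2}{1-p}\bigr)\bigr) = \frac12\bigl(\frac{2}{1-p}+2\bigr)\bigl(\frac{2}{1-p}+1\bigr)$ — here I must be careful with the index bookkeeping so the leftover constant comes out exactly $\bigl(\frac{2}{1-p}+2\bigr)\bigl(\frac{2}{1-p}+1\bigr)$ and the base term is $\overline P(2)$ as written — yields the third line of \eqref{pkl1}.

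The routine part is the algebra; the only genuine subtlety, and the step I would be most careful about, is justifying the interchange of limit and recursion — namely that one may replace $\bbE[N(k-1,t)]/t$ by its limit $P(k-1)$ when taking $t\to\infty$ in the $k$-th equation. This is legitimate because Lemma \ref{lemmaconvk} guarantees existence of each $P(k)$ individually, so an induction on $k$ (base cases $k=1,2$ above, inductive step via the displayed limiting recursion) closes the argument cleanly; the constant $\Gamma\!\bigl(1+\frac{2}{1-p}\bigr)$ that appears when I unfold the Gamma factors is exactly what reappears in $C_p$ of Corollary \ref{remarkTeo}, which is a useful consistency check on the bookkeeping.
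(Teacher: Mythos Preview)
Your overall strategy and the final formulas are correct, but the justification has a genuine gap. Dividing \eqref{l1theequations} by $t+1$ and letting $t\to\infty$ yields only the tautology $P(k)=P(k)$: in your displayed identity $\bbE[N(1,t+1)]/(t+1)=\tfrac{t}{t+1}\bigl(1-\tfrac{1-p}{2t}\bigr)\bbE[N(1,t)]/t+\tfrac{1-p}{t+1}$, the factor $\tfrac{t}{t+1}\bigl(1-\tfrac{1-p}{2t}\bigr)\to 1$ and $\tfrac{1-p}{t+1}\to 0$, so both sides tend to $P(1)$ and nothing is learned. Your written limit equations are also internally inconsistent: solving ``$P(1)=P(1)-\tfrac{1-p}{2}P(1)+(1-p)$'' literally gives $P(1)=2$, and ``$P(2)=P(2)-(1-p)P(2)+\tfrac{1-p}{2}P(1)+p$'' gives $P(2)=\tfrac12 P(1)+\tfrac{p}{1-p}$, neither of which matches what you then claim. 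Presumably you intend the relations without the spurious leading $P(k)$ on the right, but those do not follow from the procedure you describe.

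What is actually needed --- and what the paper does --- is to work with the \emph{increments}. Rewrite the recursion as $\bbE[N(k,t+1)]-\bbE[N(k,t)]=$ (an explicit combination of $\bbE[N(k,t)]/t$ and $\bbE[N(k-1,t)]/t$); since each ratio converges by Lemma \ref{lemmaconvk}, the right-hand side converges to some $Q(k)$. A Ces\`aro argument (sum the increments from a large $T$ to $t$, divide by $t$, and use $\bbE[N(k,t)]/t\to P(k)$) then forces $Q(k)=P(k)$, yielding the correct fixed-point equations $P(1)=-\tfrac{1-p}{2}P(1)+(1-p)$ and $P(k)=-\tfrac{(1-p)k}{2}P(k)+\tfrac{(1-p)(k-1)}{2}P(k-1)$ for $k\ge 2$ (with the extra $+p$ when $k=2$), whose solutions are \eqref{pkl1}. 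The subtlety you flag --- replacing $\bbE[N(k-1,t)]/t$ by $P(k-1)$ --- is not the real issue, since that substitution is immediate from Lemma \ref{lemmaconvk}; the missing ingredient is the Ces\`aro step identifying $Q(k)$ with $P(k)$. (Separately, your telescoping product has a slip: $\prod_{j=3}^{k}(j-1)=(k-1)!$, not $(k-1)!/2$, which is the source of the stray $\tfrac12$ you noticed in your constant.)
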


\begin{proof}
We have proved that the limit ${P}(k)$ exists for every $k \in \mathbb{N}$. Thus, by Lemma \ref{propA1} we can state that 
$
Q(k) := \lim_{t \rightarrow \infty} \left( \bbE[N(k,t+1)] - \bbE[N(k,t)] \right)
$
exists for every $k \in \mathbb{N}$. That is
\begin{equation}
\forall \epsilon > 0 ~ ~ \exists T \text{ s.t. } \forall t \ge T : Q(k)-\epsilon \le \bbE[N(k,t+1)] - \bbE[N(k,t)] \le Q(k) + \epsilon. 
\end{equation}
Using this we obtain that
\begin{align*}
(t-T)(Q(k)-\epsilon) \le& \sum_{i=0}^{t-T-1} \Big[\bbE[N(k,T+i+1)]-\bbE[N(k,T+i)]\Big] \\
=& \bbE[N(k,t)]-\bbE[N(k,T)] \le (t-T)(Q(k) + \epsilon).
\end{align*}

Furthermore, dividing by $t+1$ and taking the limit as $t \rightarrow \infty$ we get 
\begin{equation*}
Q(k)-\epsilon \le {P}(k) \le Q(k)+\epsilon ,
\end{equation*}
or, equivalently 
$
{P}(k) - \epsilon \le Q(k) \le {P}(k) + \epsilon .
$
Since $\epsilon$ is chosen arbitrarily, then 
\begin{equation}\label{Ql1}
Q(k) = {P}(k).
\end{equation}
Thus, by (\ref{Ql1}) we obtain that ${P}(1)=2(1-p)/(3-p)$ and ${P}(2)=(1-p){P}(1)/2(2-p)+p/(2-p)=(1-p)^2/(2-p)(3-p)+p/(2-p)$.
Now, for $k > 2$, by Lemma \ref{propA1} we get
\begin{equation}
\bbE[N(k,t+1)]-\bbE[N(k,t)] = -\frac{(1-p)k}{2t}\bbE[(k,t)]+\frac{(1-p)(k-1)}{2t}\bbE[N(k-1,t).
\end{equation}
Then, taking the limit as $t \rightarrow +\infty$ and using again (\ref{Ql1}), we obtain
\begin{equation}\label{ricorrenzal1}
{P}(k) = -\frac{(1-p)k}{2}{P}(k)+\frac{(1-p)(k-1)}{2}{P}(k-1)=\frac{(1-p)(k-1)}{2+(1-p)k}{P}(k-1).
\end{equation}
Now, iterating (\ref{ricorrenzal1}) we have
\begin{align*}
{P}(k) = & \frac{(1-p)^{k-2}(k-1)(k-2)\dots (k-(k-2))}{[2+(1-p)k]\dots [2+(1-p)(k-(k-3))]}{P}(2)\\
=& \frac{(k-1)!}{\left[ \frac{2}{1-p}+k \right] \cdots \left[ \frac{2}{1-p}+(k-(k-3)) \right]}{P}(2)\\
=& \frac{\Gamma(k)\Gamma\left( 3+\frac{2}{1-p} \right)}{\Gamma\left( k+1+\frac{2}{1-p} \right)}{P}(2)\\
=&{P}(2) \Big(\frac{2}{1-p}+2\Big)\Big(\frac{2}{1-p}+1\Big)B\left(k,1+\frac{2}{1-p}\right),
\end{align*}
where $\Gamma(x)$ and $B(x,y)$ are the Gamma and the Beta functions, respectively.
\end{proof}

\textbf{PART 2. $l > 1$}\\

Now we are going to prove the existence of $P(k)$. To achieve this, we use the Lemma \ref{Ewindow} and the following result.

\begin{lemma} \label{l-lemmaHk}
For $k\in\mathbb{N}$, the following limit exits,
	\begin{equation}\label{HK}
	H_k = \lim_{t \rightarrow \infty} \sum_{m=1}^{l}\mathbb{P}(M_m(t) = k) .
	\end{equation}
\end{lemma}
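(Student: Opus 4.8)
The plan is to deduce the existence of $H_k$ from the convergence, as $t\to\infty$, of each of the $l$ individual sequences $\big(\mathbb{P}(M_m(t)=k)\big)_{t}$, $m=1,\dots,l$: since $\sum_{m=1}^{l}\mathbb{P}(M_m(t)=k)$ is a finite sum, $H_k$ exists as soon as every summand converges. So the whole argument reduces to the fixed-$m$ limits.

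First I would record the one-step evolution of the window degrees, i.e. the recursions (\ref{PM1})--(\ref{PMk}). For $1\le m\le l-1$, the vertex occupying position $m$ in the window at time $t+1$ is the vertex that occupied position $m+1$ at time $t$, and its degree increases by one exactly when $v_{t+1}$ attaches to it, which by the UPA rule happens with probability $\tfrac{p}{l}+(1-p)\tfrac{M_{m+1}(t)}{2t}$. Conditioning on $\mathcal{G}_t$ and then taking expectations yields, for $1\le m\le l-1$ and all $k\ge1$,
\begin{equation}\label{eq:onestepM}
\mathbb{P}(M_m(t+1)=k)=\Big(1-\tfrac{p}{l}-(1-p)\tfrac{k}{2t}\Big)\mathbb{P}(M_{m+1}(t)=k)+\Big(\tfrac{p}{l}+(1-p)\tfrac{k-1}{2t}\Big)\mathbb{P}(M_{m+1}(t)=k-1),
\end{equation}
with the convention that a probability with argument $0$ equals $0$, while for $m=l$ one simply has $\mathbb{P}(M_l(t+1)=k)=\mathbbm{1}(k=1)$, because $v_{t+1}$ always enters with degree $1$.

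The core step is a downward induction on $m$, from $m=l$ to $m=1$, with inductive statement ``$\lim_{t\to\infty}\mathbb{P}(M_m(t)=k)$ exists for every $k\ge1$''. The base case $m=l$ is immediate, $\mathbb{P}(M_l(t)=k)=\mathbbm{1}(k=1)$ being constant in $t$. For the inductive step I fix $k$ and let $t\to\infty$ in (\ref{eq:onestepM}): the coefficients converge to $1-\tfrac{p}{l}$ and $\tfrac{p}{l}$ respectively, and $\mathbb{P}(M_{m+1}(t)=k)$ and $\mathbb{P}(M_{m+1}(t)=k-1)$ converge by the inductive hypothesis, so by the algebra of limits the right-hand side converges; hence $\mathbb{P}(M_m(t)=k)$ converges. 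Summing the $l$ resulting limits over $m$ gives the existence of $H_k$, which is the assertion. (Writing $a_m^{(k)}:=\lim_t\mathbb{P}(M_m(t)=k)$, passing to the limit in (\ref{eq:onestepM}) gives $a_m^{(k)}=(1-\tfrac{p}{l})a_{m+1}^{(k)}+\tfrac{p}{l}a_{m+1}^{(k-1)}$ with $a_l^{(k)}=\mathbbm{1}(k=1)$, so $a_m^{(\cdot)}$ is a shifted $\mathrm{Binomial}(l-m,p/l)$ law; summing over $m$ then reproduces the closed form (\ref{FHk}), but only existence is needed here.)

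I do not expect a genuine obstacle: the convenient feature --- in contrast with the Ces\`aro/sandwich argument needed for $N(k,t)/t$ in Lemma \ref{lemmaconvk} --- is that (\ref{eq:onestepM}) expresses the ``row'' $\mathbb{P}(M_m(t+1)=\cdot)$ in terms of the \emph{previous} row $\mathbb{P}(M_{m+1}(t)=\cdot)$ rather than in terms of itself, so plain continuity of the arithmetic operations on convergent sequences suffices. The only points that need care are getting the one-step attachment probability exactly right (the uniform contribution $\tfrac{p}{l}$ coexisting with the preferential contribution $(1-p)\tfrac{k}{2t}$, and the degree-$(k-1)$ term entering with coefficient $\tfrac{p}{l}+(1-p)\tfrac{k-1}{2t}$), and keeping the order of the induction straight: downward in $m$, for all $k$ simultaneously, so that both $\mathbb{P}(M_{m+1}(t)=k)$ and $\mathbb{P}(M_{m+1}(t)=k-1)$ are already under control when $\mathbb{P}(M_m(t)=k)$ is treated.
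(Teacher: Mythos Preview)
Your proposal is correct and follows essentially the same route as the paper: both derive the one-step recursion \eqref{PM1}--\eqref{PMk} (your \eqref{eq:onestepM} is exactly the paper's $I_k(t)\mathbb{P}(M_{m+1}(t)=k)+L_{k-1}(t)\mathbb{P}(M_{m+1}(t)=k-1)$, with $I_k(t)=1-\tfrac{p}{l}-(1-p)\tfrac{k}{2t}$ and $L_{k-1}(t)=\tfrac{p}{l}+(1-p)\tfrac{k-1}{2t}$), and both obtain the limit by observing that after at most $l-m$ applications of the recursion one reaches the deterministic boundary $M_l(\cdot)\equiv 1$, while the coefficients $I_k(t),L_{k-1}(t)$ converge to $1-\tfrac{p}{l}$ and $\tfrac{p}{l}$. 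Your downward induction on $m$ packages this more cleanly than the paper's somewhat loose phrase ``finite product of functions $I_k(\cdot)$ and $L_{k-1}(\cdot)$'' (it is really a finite \emph{sum} of such products), and your identification of the limiting law of $M_m$ with a shifted $\mathrm{Binomial}(l-m,p/l)$ is exactly what underlies the closed form \eqref{FHk}; but these are presentational differences, not a different argument.
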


\begin{proof}
We start by recalling that at the starting period $t=l$ the initial graph has $l+1$ nodes, $(v_0,v_1,\dots,v_l)$, so that $deg(v_0)=l$ and $deg(v_j)=1$, $1\leq j\leq l$. That is, $\mathbb{P}(M_m(l)=1)=1$, for $1\leq m\leq l$. Furthermore, given the graph process at time $t$, we add at time $t+1$ a new vertex, $v_{t+1}$, together with an edge going out from this, so $v_{t+1}$ has degree one for all $t\geq l$, which means that for $m=l$, $\mathbb{P}(M_l(t+1)=1)=1$ for all $t\geq l$. 

Now note that since $\mathbb{P}(M_m(t+1) = 1 \mid M_{m+1}(t) \neq 1) = 0$, $t\geq l$,  then for $1\leq m < l$, 
\begin{eqnarray}\label{PM1}
\mathbb{P}(M_m(t+1) = 1) &=& \mathbb{P}(M_m(t+1) = 1 \mid M_{m+1}(t) = 1) \mathbb{P}(M_{m+1}(t) = 1) \nonumber\\
&=&I_1(t)\mathbb{P}(M_{m+1}(t) = 1),\text{ and for $k>1$},\\
\mathbb{P}(M_m(t+1) = k) &=& \mathbb{P}(M_m(t+1) = k \mid M_{m+1}(t) = k) \mathbb{P}(M_{m+1}(t) = k) \nonumber\\
&+& \mathbb{P}(M_m(t+1) = k \mid M_{m+1}(t) =k-1) \mathbb{P}(M_{m+1}(t) =k-1\nonumber\\
&=&I_k(t)\mathbb{P}(M_{m+1}(t) = k)+L_{k-1}(t)\mathbb{P}(M_{m+1}(t) =k-1),\label{PMk}
\end{eqnarray}
where,
\begin{align*}
I_k(t) &:=\mathbb{P}(M_m(t+1) = k \mid M_{m+1}(t) = k)= p\frac{l-1}{l} + (1-p)\frac{2t-k}{2t}, \text{ and}\\
L_{k-1}(t) &:=\mathbb{P}(M_m(t+1) = k \mid M_{m+1}(t) =k-1)=\frac{p}{l} + (1-p)\frac{k-1}{2t}, \quad k\in\mathbb{N}.
\end{align*}

Thus by (\ref{PM1}) and (\ref{PMk}) we can construct for $1\leq m\leq l$, the evolution of $\mathbb{P}(M_m(t) = k)$ over time. In fact if $k=1$, then  at time  $t=l$, $\mathbb{P}(M_m(l) = 1)=1$, while for $t>l$, $\mathbb{P}(M_l(t) = 1)=1$ and by (\ref{PM1}) we get that
$$
\mathbb{P}(M_m(t) = 1)=\prod_{h=1}^{l-m}I_1(t-h), \quad 1\leq m< l,
$$
with the convention that $I_1(x)=1$ when $x<l$.
In this case note that $t-h\geq l$ if $t\geq 2l-1$, and  
$\lim_{t \rightarrow \infty}I_1(t) = 1-\frac{p}{l}$.
Then, we are able to write 
	\begin{equation}
	\begin{split}
	H_1 
	=  \sum_{m=1}^{l} \left(1-\frac{p}{l}\right)^{l-m} = \frac{l}{p} - \frac{l}{p}\left(1-\frac{p}{l}\right)^l. \label{H1formula}
	\end{split}
	\end{equation}
For the case $k>1$, by (\ref{PMk}) we also note that for every $1\leq m\leq l$, and for every $t\geq l$, $\mathbb{P}(M_{m}(t) = k)$ is a finite product of functions $I_{k}(\cdot)$ and $L_{k-1}(\cdot)$ calculated at different instants. Furthermore, since
\begin{equation}
\lim_{t \rightarrow \infty} I_k(t)= 1 - \frac{p}{l} \text{ and } \lim_{t \rightarrow \infty} L_{k-1}(t) = \frac{p}{l},  
\end{equation}
so, both of these limits exist. Then, by a procedure similar to that we made to find  $H_1$, we  conclude that the limit $H_k$ also exists for $k > 1$, and
	\begin{equation}\label{Hkformula}
	H_k =\left(\frac{p}{l}\right)^{k-1}\sum_{m=1}^{l-(k-1)}\binom{l-m}{l-m-(k-1)}\left(1-\frac{p}{l}\right)^{l-m-(k-1)}.	
	\end{equation}
 
\end{proof}
Now we are ready to prove the existence of $P(k)$.
\begin{lemma} \label{l-lemmaconv1}
	In the UPA model with fixed window size $l>1$, the limit ${P}(k)$ exists for every $k \in \mathbb{N}$. 
\end{lemma}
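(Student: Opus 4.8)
The plan is to prove the existence of $P(k)=\lim_{t\to\infty}\bbE[N(k,t)]/t$ by induction on $k$, following the template of the case $k>1$ in the proof of Lemma \ref{lemmaconvk}. The two ingredients are the recursion of Lemma \ref{Ewindow} and the limits $H_j=\lim_{t\to\infty}\sum_{m=1}^{l}\mathbb{P}(M_m(t)=j)$ supplied by Lemma \ref{l-lemmaHk}; for the inductive step $k\ge2$ one additionally uses that $P(k-1)$ exists. The base case is $k=1$ and depends only on $H_1$.

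First I would recast (\ref{theequations-l-general-1}) in the uniform shape
\begin{equation*}
\bbE[N(k,t+1)]=\bbE[N(k,t)]\Bigl(1-\tfrac{(1-p)k}{2t}\Bigr)+R_k(t),
\end{equation*}
where, writing $S_j(t):=\sum_{m=1}^{l}\mathbb{P}(M_m(t)=j)$, one has $R_1(t)=1-\tfrac{p}{l}S_1(t)$ and, for $k\ge2$, $R_k(t)=\tfrac{p}{l}\bigl(S_{k-1}(t)-S_k(t)\bigr)+\tfrac{(1-p)(k-1)}{2t}\bbE[N(k-1,t)]$. Dividing by $t+1$, an elementary rearrangement valid for $t$ large (so that $1-\tfrac{(1-p)k}{2t}>0$) gives the one-step comparison
\begin{equation*}
\frac{\bbE[N(k,t+1)]}{t+1}\gtreqqless\frac{\bbE[N(k,t)]}{t}\iff\frac{\bbE[N(k,t)]}{t}\lesseqqgtr\theta_k(t):=\frac{2\,R_k(t)}{2+(1-p)k}.
\end{equation*}
By Lemma \ref{l-lemmaHk}, $S_j(t)\to H_j$ for every $j$; using the closed form $H_1=\tfrac{l}{p}\bigl(1-(1-\tfrac{p}{l})^l\bigr)$ from that lemma one gets $R_1(t)\to(1-\tfrac{p}{l})^l$, and for $k\ge2$ the induction hypothesis gives $\bbE[N(k-1,t)]/t\to P(k-1)$, so $R_k(t)$ converges as well. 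Hence in every case $\theta_k(t)$ tends to a finite limit $L_k$, and $0\le\bbE[N(k,t)]/t\le1$.

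It then remains to deduce $a_t:=\bbE[N(k,t)]/t\to L_k$ from the two facts that $a_{t+1}$ moves toward $\theta_k(t)$ and that $\theta_k(t)\to L_k$; this is exactly the situation handled by the $\limsup$--$\liminf$ argument in Lemma \ref{lemmaconvk}, case $k>1$, with the increasing line $g(\epsilon)$ there replaced by the constant $L_k$ plus a vanishing perturbation $\theta_k(t)-L_k$. Concretely: fix $\delta>0$ and $T$ with $|\theta_k(t)-L_k|<\delta$ for $t\ge T$; plugging $a_t\le L_k+\delta$ into the recursion and using $R_k(t)=\tfrac{2+(1-p)k}{2}\theta_k(t)$ yields the self-improving implication $a_{t+1}\le L_k+\delta$, and symmetrically from below. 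Thus $(a_t)_{t\ge T}$ either is eventually trapped in $[L_k-\delta,L_k+\delta]$, or is eventually monotone and bounded; in the latter case, summing the increments $(t+1)a_{t+1}-t\,a_t=-\tfrac{(1-p)k}{2}a_t+R_k(t)$ and matching the Ces\`aro limit of the increments against $\lim_t a_t$ forces that limit to equal $L_k$, a contradiction. Letting $\delta\downarrow0$ gives $a_t\to L_k$, so $P(k)$ exists, and the induction closes. I expect this last step to be the only delicate point: since $\theta_k(t)$ is merely convergent rather than constant, the plain monotonicity argument that worked for $P(1)$ when $l=1$ fails, and one must run the contraction/Ces\`aro argument of Lemma \ref{lemmaconvk}; the algebraic recasting of (\ref{theequations-l-general-1}) and the evaluation of the limits via Lemma \ref{l-lemmaHk} are routine.
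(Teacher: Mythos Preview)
Your argument is correct. For $k\ge2$ it is essentially the paper's own route: the paper refers back to the $\sup$--$\inf$ halving scheme of Lemma \ref{lemmaconvk} with $g(\epsilon)$ replaced by
\[
h(\epsilon)=\frac{2}{2+(1-p)k}\Bigl(\tfrac{p}{l}(H_{k-1}-H_k+2\epsilon)+\tfrac{(1-p)(k-1)}{2}(P(k-1)+\epsilon)\Bigr),
\]
which is exactly your $\theta_k$ perturbed by the errors in $S_j(t)\to H_j$ and $\bbE[N(k-1,t)]/t\to P(k-1)$. You close with a Stolz--Ces\`aro step instead of the midpoint halving, but the mechanism (the self-improving trap $a_t\le L_k+\delta\Rightarrow a_{t+1}\le L_k+\delta$) is the same.

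The genuine difference is at $k=1$. The paper does \emph{not} run the comparison argument there; it iterates the recursion explicitly to get
\[
\bbE[N(1,t+1)]=\bbE[N(1,l)]\,a_t+\sum_{J=0}^{t-l-1}\Bigl(1-\tfrac{p}{l}H_1(t-J-1)\Bigr)\prod_{k=0}^{J}\Bigl(1-\tfrac{1-p}{2(t-k)}\Bigr)+\Bigl(1-\tfrac{p}{l}H_1(t)\Bigr),
\]
and then, to show the middle sum divided by $t+1$ converges, it imports the already-proved case $l=1$ (Lemma \ref{lemmaconvk}) to know that $b_t/(t+1)$ converges, where $b_t=\sum_{J}\prod_{k}(1-\tfrac{1-p}{2(t-k)})$. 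Your uniform treatment avoids this detour through $l=1$ entirely: since $\theta_1(t)=\tfrac{2}{3-p}\bigl(1-\tfrac{p}{l}S_1(t)\bigr)$ converges by Lemma \ref{l-lemmaHk} alone, the same trapping/Ces\`aro argument already handles $k=1$. That is a cleaner and more self-contained proof of the base case; the paper's iteration buys nothing extra here beyond producing the explicit value, which you recover anyway from $L_1$.
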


\begin{proof}
We divide the proof in 2 cases, $k=1$ and $k>1$.  
Let $H_k(t)=\sum_{m=1}^{l}\mathbb{P}(M_{m}(t) = k)$ and 
$$
a_t=\prod\limits_{J=l}^{t} \left( 1-\frac{1-p}{2J} \right).
$$ 

Applying (\ref{theequations-l-general-1}) recursively, we get that for $k=1$
\begin{equation}
\begin{split}\label{43}
\bbE[{N}(1,t+1)] =  \bbE[{N}(1,l)]a_t  
+  \sum_{J=0}^{t-l-1}\left( 1-\frac{pH_1(t-J-1)}{l} \right) \prod\limits_{k=0}^{J}\left( 1-\frac{1-p}{2(t-k)} \right) 
+  \left(1-\frac{pH_1(t)}{l}\right).
\end{split} 
\end{equation}
Now observe the sequence $\{a_t\}_{t \in \mathbb{N}}$ is decreasing  (except when $p=1$, in this case $a_t=1, t \in \mathbb{N}$) and bounded from below by $0$. Then $\lim_{t \rightarrow +\infty}a_t$ exists and is finite. 
Hence,
\begin{equation}\label{alimit0}
\lim_{t \rightarrow +\infty} \frac{a_t}{t+1}= 0.
\end{equation}
Furthermore, by Lemma \ref{l-lemmaHk}
\begin{equation}\label{pH1limit}
\lim\limits_{t \rightarrow \infty} \frac{1}{t+1} \left(1-\frac{pH_1(t)}{l}\right) = 0, 
\end{equation}
and  for every $\epsilon > 0$ there exists a $\overline{t}$ such that 
\begin{equation}\label{Hlimitations}
H_1-\epsilon \le H_1(t) \le H_1+\epsilon, \forall t \ge \overline{t}.
\end{equation}
Note that once we chose $\epsilon$, $\overline{t}$ is fixed and we can split the summation
\begin{equation}\label{split}
\begin{split}
 \sum_{J=0}^{t-l-1}\left( 1-\frac{pH_1(t-J-1)}{l} \right) \prod\limits_{k=0}^{J}\left( 1-\frac{1-p}{2(t-k)} \right) 
= & \sum_{J=0}^{t-\overline{t}-1}\left( 1-\frac{pH_1(t-J-1)}{l} \right) \prod\limits_{k=0}^{J}\left( 1-\frac{1-p}{2(t-k)} \right) \\
+ & \sum_{J=t-\overline{t}}^{t-l-1}\left( 1-\frac{pH_1(t-J-1)}{l} \right) \prod\limits_{k=0}^{J}\left( 1-\frac{1-p}{2(t-k)} \right),
\end{split}
\end{equation}
where the second term in the split has a fixed number of elements, given by $(t-l-1)-(t-\overline{t}) = \overline{t}-l-1$. Hence,
\begin{equation}\label{48}
\lim\limits_{t \rightarrow \infty} \frac{1}{t+1} \sum_{J=t-\overline{t}}^{t-l-1}\left( 1-\frac{pH_1(t-J-1)}{l} \right) \prod\limits_{k=0}^{J}\left( 1-\frac{1-p}{2(t-k)} \right) = 0 .
\end{equation}
Let now 
\begin{equation}\label{btexpression}
b_{t} = \sum_{J=0}^{t-2}\left( \prod\limits_{k=0}^{J}\left( 1-\frac{1-p}{2(t-k)} \right) \right) .
\end{equation} 
Iterating the last equation of (\ref{l1theequations}), when $l=1$ we get
\begin{equation*}
\begin{split}
\bbE[{N}(1,t+1)] =\bbE[{N}(1,1)]a_t+ (1-p)b_t+(1-p).
\end{split} 
\end{equation*}
By (\ref{alimit0}) and  since ${P}(1)$ exists when $l=1$, then we have that $b_t/(t+1)$ converges as $t\rightarrow\infty$. Therefore,

\begin{equation}\label{splitk1approx}
\exists L \text{ s.t. } \lim\limits_{t \rightarrow \infty} \frac{1}{t+1}\sum_{J=0}^{t-\overline{t}-1} \prod\limits_{k=0}^{J}\left( 1-\frac{1-p}{2(t-k)} \right) = L. 
\end{equation}
Thus, by (\ref{Hlimitations}) and (\ref{splitk1approx}), the first term of the split in (\ref{split}) satisfies
\begin{equation}\label{100}
\begin{split}
& \left( 1-\frac{p(H_1+\epsilon)}{l} \right) L\\
\le & \lim\limits_{t \rightarrow \infty} \frac{1}{t+1} \sum_{J=0}^{t-\overline{t}-1}\left( 1-\frac{pH_1(t-J-1)}{l} \right) \prod\limits_{k=0}^{J}\left( 1-\frac{1-p}{2(t-k)} \right) \\
\le & \left( 1-\frac{p(H_1-\epsilon)}{l} \right) L.
\end{split}
\end{equation}
Since $\epsilon$ is chosen arbitrary, then the limit above exists. Hence by (\ref{43}), (\ref{alimit0}), (\ref{pH1limit}), (\ref{48}) and (\ref{100})
$$
\lim_{t\rightarrow\infty}\frac{\bbE[N(1,t+1)]}{t+1}=\left(1-\frac{pH_1}{l}\right).
$$
For the case $k > 1$, by Lemma \ref{l-lemmaHk} we know that $H_k$  exists for $k > 1$. Then, we perform the same reasoning of Lemma \ref{lemmaconvk} when $k>1$, 
but replacing $g(\epsilon)$ by
\begin{equation*}
h(\epsilon) := \left( \frac{p}{l}\left( H_{k-1}-H_k + 2\epsilon \right) + \frac{(1-p)(k-1)}{2}\left({P}(k-1)+\epsilon \right) \right) \frac{2}{2+(1-p)k}.
\end{equation*}
\end{proof}



\begin{lemma}\label{5.1}
	In the UPA model with fixed window size $l>1$, ${P}(k)$, $k\geq1$ is given by (\ref{pkll1}).	
\end{lemma}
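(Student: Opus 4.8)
The plan is to reproduce, for the window recursion of Lemma~\ref{Ewindow}, the argument already used in Lemma~\ref{l1asppl} for $l=1$. By Lemma~\ref{l-lemmaconv1} the limit ${P}(k)=\lim_{t\to\infty}\bbE[N(k,t)]/t$ exists for every $k\in\mathbb{N}$, and by Lemma~\ref{l-lemmaHk} $\sum_{m=1}^{l}\mathbb{P}(M_m(t)=k)\to H_k$. Reading off the right-hand side of (\ref{theequations-l-general-1}), this immediately shows that the increments $Q(k):=\lim_{t\to\infty}\big(\bbE[N(k,t+1)]-\bbE[N(k,t)]\big)$ exist, with
\begin{equation*}
Q(1)=1-\frac{pH_1}{l}-\frac{1-p}{2}{P}(1),\qquad Q(k)=\frac{p}{l}(H_{k-1}-H_k)+\frac{1-p}{2}\big((k-1){P}(k-1)-k{P}(k)\big)\quad(k\ge2).
\end{equation*}
A Cesàro mean argument identical to the one in Lemma~\ref{l1asppl} (sum the increments from a large $T$ up to $t$, divide by $t+1$, let $t\to\infty$, and compare with ${P}(k)=\lim\bbE[N(k,t)]/t$) then forces $Q(k)={P}(k)$ for every $k$.

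The rest is elementary algebra. Setting $Q(1)={P}(1)$ gives ${P}(1)=\frac{2}{3-p}(1-\tfrac{pH_1}{l})$; substituting $H_1=\frac{l}{p}-\frac{l}{p}(1-\tfrac{p}{l})^{l}$ from (\ref{H1formula}) collapses the bracket to $(1-\tfrac{p}{l})^{l}$ and yields the first line of (\ref{pkll1}). For $k\ge2$, setting $Q(k)={P}(k)$ and solving for ${P}(k)$ gives
\begin{equation*}
{P}(k)=\frac{2}{2+(1-p)k}\Big(\frac{p}{l}(H_{k-1}-H_k)+\frac{(1-p)(k-1)}{2}{P}(k-1)\Big),
\end{equation*}
which is the middle line of (\ref{pkll1}); note this holds for all $k\ge2$, and in particular at $k=l+1$, where $H_{l}$ is still nonzero so the full expression is required.

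Finally, for $k>l+1$ one has $k-1>l$, hence $H_k=H_{k-1}=0$ by (\ref{FHk}), and the recursion of the previous display degenerates to ${P}(k)=\frac{(1-p)(k-1)}{2+(1-p)k}{P}(k-1)=\frac{k-1}{\tfrac{2}{1-p}+k}\,{P}(k-1)$. Iterating from $k$ down to $l+1$,
\begin{equation*}
{P}(k)=\Bigg(\prod_{j=l+2}^{k}\frac{j-1}{\tfrac{2}{1-p}+j}\Bigg){P}(l+1)=\frac{\Gamma(k)\,\Gamma\!\big(l+2+\tfrac{2}{1-p}\big)}{\Gamma(l+1)\,\Gamma\!\big(k+1+\tfrac{2}{1-p}\big)}\,{P}(l+1),
\end{equation*}
and since the common factor $\Gamma\big(k+l+2+\tfrac{2}{1-p}\big)$ cancels between numerator and denominator of the two Beta functions, this equals $\dfrac{B\big(k,l+2+\tfrac{2}{1-p}\big)}{B\big(l+1,k+1+\tfrac{2}{1-p}\big)}{P}(l+1)$, the last line of (\ref{pkll1}).

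Almost all of this is bookkeeping. The one step deserving care is the passage $Q(k)={P}(k)$: it rests on the already-established limits $H_k$ and ${P}(k)$ (Lemmas~\ref{l-lemmaHk} and~\ref{l-lemmaconv1}) together with the fact that the Cesàro mean of a convergent sequence converges to the same limit. The second mildly technical point is the Gamma-to-Beta rewriting, which is routine once one keeps track of the index ranges ($k=l+1$ belonging to the middle formula, $k\ge l+2$ to the iterated one). Given the results available in the excerpt, neither constitutes a genuine obstacle.
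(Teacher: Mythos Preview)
Your proof is correct and follows essentially the same route as the paper's: you invoke Lemmas~\ref{l-lemmaHk} and~\ref{l-lemmaconv1} to ensure the limits exist, repeat the Ces\`aro argument of Lemma~\ref{l1asppl} to obtain $Q(k)={P}(k)$, solve the resulting linear recursion for $k\le l+1$, and then iterate the simplified recursion (with $H_{k}=H_{k-1}=0$) for $k>l+1$ to reach the Gamma/Beta expression. The only cosmetic difference is that the paper treats the case $k>l+1$ first and appeals directly to $\mathbb{P}(M_m(t)=k)=0$ in (\ref{theequations-l-general-1}) rather than to $H_k=0$, but this is the same observation.
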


\begin{proof}
Following the same ideas of the first lines of the proof of Lemma 
\ref{l1asppl}, we obtain
that for every $k \in \mathbb{N}$ 
	\begin{equation}\label{QQ}
	{P}(k)=\lim_{t\rightarrow +\infty} \left( \bbE[{N}(k,t+1)]-\bbE[{N}(k,t)] \right).
	\end{equation}

Now observe  that within a window of size $l$, the maximum degree of a node is $l$, then $\mathbb{P}(M_m(t)=k)=0$ for $k>l$, $1\leq m\leq l$. Therefore for $k>l+1$, by Lemma (\ref{Ewindow}) 
\begin{equation}\label{E-E}
\bbE[{N}(k,t+1)]-\bbE[{N}(k,t)]  = -\frac{(1-p)k}{2t} \bbE[{N}(k,t)] + \frac{(1-p)(k-1)}{2t}\bbE[{N}(k-1,t)].
\end{equation}
Then, taking the limit as $t\rightarrow\infty$ in (\ref{E-E}) and using (\ref{QQ}), when $k>l+1$ we get 
\begin{equation}\label{105}
{P}(k) = \frac{(1-p)(k-1)}{2+(1-p)k}{P}(k-1).
\end{equation}
Note that (\ref{105}) is equal to (\ref{ricorrenzal1}). Then, as we did in the proof of Lemma \ref{l1asppl}, iterating (\ref{105}) we obtain 
\begin{equation}
{P}(k) = \frac{\Gamma\left( k \right) \Gamma\left( l+2+
\frac{2}{1-p} \right)}{\Gamma\left( l+1 \right)
\Gamma\left( k+1+\frac{2}{1-p}\right)}{P}(l+1)=\frac{B\left(k,l+2+\frac{2}{1-p}\right)}{B\left(l+1,k+1+\frac{2}{1-p}\right)}{P}(l+1),
\end{equation}

For $1\leq k \le l+1$, we use Lemma \ref{Ewindow} and (\ref{QQ}) and obtain 
$${P}(k)=\begin{cases}
\frac{2}{(3-p)}\left(1-\frac{p}{l}\right)^l, \text{ if $k=1$}\\ 
\frac{2}{2+k(1-p)}\left(\frac{p}{l}(H_{k-1}-H_k)+\frac{(1-p)(k-1)}{2}\right){P}(k-1), \text{ if $k=2,\dots,l+1$},
\end{cases}$$
where $H_k$ is given by (\ref{H1formula}) and (\ref{Hkformula}), with  $H_k=0$ for any $k>l$.
\end{proof}



\bibliographystyle{plain}
\bibliography{BibSNG}

\end{document}